\documentclass[letterpaper,11pt]{article}
\usepackage[letterpaper,margin=1in]{geometry}
\usepackage[T1]{fontenc}
\usepackage{graphicx}
\usepackage{amsmath,amssymb, amsthm, todonotes}
\usepackage{thmtools}
\usepackage{hyperref}
\usepackage{xcolor}
\usepackage{xspace}
\usepackage[most]{tcolorbox}
\usepackage[capitalise]{cleveref}
\usepackage{enumitem}
\newtheorem{theorem}{Theorem}
\newtheorem{conjecture}[theorem]{Conjecture}

\newtheorem{lemma}[theorem]{Lemma}
\newtheorem{corollary}[theorem]{Corollary}

\crefname{claim}{claim}{claims}
\Crefname{Claim}{Claim}{Claims}
\newcommand{\mydiamond}{\rotatebox[origin=c]{45}{$\vcenter{\hbox{$\Box$}}$}}

\newcommand{\say}[1]{``#1''}

\newcounter{tbox}

\title{Forbidding anticomplete planar minors: Induced Erd\H{o}s--P\'osa property and Maximum Independent Set in QP
\thanks{This work was initiated during the 'Focused Workshop on Erdős--Pósa problems' held in Będlewo, Poland, March 15-20, 2026, which was supported by the Banach Center of the Institute of Mathematics of the Polish Academy of Sciences.}
}

\author{Anonymous}
\author{Maria Chudnovsky \thanks{Princeton University, Princeton, NJ, USA. Supported by NSF Grants DMS-2348219 and CCF-2505100, AFOSR grant FA9550-25-1-0275, and a Guggenheim Fellowship.}
\and
Amadeus Reinald \thanks{University of Warsaw, Poland. Supported by Polish National Science Centre SONATA BIS-12 grant number 2022/46/E/ST6/00143.}
\and St\'{e}phan Thomass\'{e} \thanks{Univ. Lyon, ENS de Lyon, UCBL, CNRS, LIP, France. Supported by the ANR project GODASse (ANR-24-CE48-4377)}}

\begin{document}

\begin{titlepage}
\date{}
\maketitle

\begin{abstract}
The Erd\H{o}s--P\'osa theorem asserts that every graph $G$ with no $k$ disjoint cycles contains a set $X$ of $f(k)$ vertices such that $G\setminus X$ has no cycle. Robertson and Seymour showed that this \emph{Erd\H{o}s--P\'osa property} also holds for $H$-minor models of any planar graph $H$.
Equivalently, if $G$ has no $k$ minor models of $H$ pairwise at distance at least 1 (i.e. disjoint), then one can remove $f(k,H)$ balls of radius 0 (i.e. vertices) to make the graph $H$-minor free.

We show that this coarse graph theory point of view generalizes to distance at least 2 versus radius 1 balls, yielding the \emph{induced Erd\H{o}s--P\'osa property} for planar minors. Namely, every graph $G$ which does not contain $k$ pairwise non-adjacent minor models of a planar graph $H$ (we say that $G$ is \emph{$kH$-free}) can be made $H$-minor free by removing $f(k,H)$ neighborhoods. The proof  relies on the fact that sparse $kH$-free graphs have linearly many independent large protrusions. The same method gives that sparse $kH$-free graphs can be made $H$-minor free by deleting $O(\log n)$ vertices (and thus have logarithmic tree-width). This gives a quasi-polynomial algorithm for the Maximum Independent Set problem for $kH$-free graphs.
\end{abstract}

\thispagestyle{empty}
\end{titlepage}

\newpage\setcounter{page}{1}

\section{Introduction}

The Erd\H{o}s--P\'osa theorem~\cite{DBLP:journals/cjm/ErdosP65} asserts that every graph $G$ with no $k$ disjoint cycles contains a set $X$ of $f(k)$ vertices such that $G\setminus X$ is a forest.
In other words, the minimum feedback vertex set is functionally equivalent to the maximum packing of cycles.
This bridged duality gap is referred to as the \emph{Erd\H{o}s--P\'osa property} (EPP), and the term is now used for a vast collection of analogous results relating the packing and hitting number of more general objects.
It often has direct algorithmic applications.
For instance, Maximum Independent Set (MIS) is polynomial-time solvable in graphs without $k$ disjoint cycles, since their feedback vertex set, and hence their tree-width, is bounded.

To this day, variants of cycles have constituted a rich playing ground for the Erd\H{o}s--P\'osa property, in terms of both positive and negative results.
On the positive side, the EPP is known to hold for cycles with length constraints~\cite{Thomassen1988,MoussetNoeverSkoricWeissenberger2017}, holes~\cite{DBLP:journals/jct/KimK20}, cycles passing through prescribed vertices~\cite{KakimuraKawarabayashiMarx2011,PontecorviWollan2012}, and even oriented cycles~\cite{RRST96}.
On the negative side, odd cycles in an Escher Wall pairwise intersect yet their hitting set is unbounded~\cite{Ree99}, induced cycles of length at least $5$ do not have the EPP~\cite{DBLP:journals/jct/KimK20}, and neither do cycles of prime lengths~\cite{gorsky2026erd}.
If we wish to generalize the EPP to more complex objects, one can observe that cycles are $K_3$-minor models. It is then natural to ask if minor models of some fixed graph $H$ have the EPP. This is not always the case since toroidal grids do not contain two disjoint $K_5$-minor models, whereas the minimum hitting set of $K_5$-minor models has unbounded size.
Arguably, the most striking result in this field is that $H$-minor models have the EPP if and only if $H$ is planar.
This is a direct application of Robertson and Seymour's celebrated grid minor theorem~\cite{GMV}: either a graph has a large grid minor, and it contains $k$ disjoint $H$-minor models, or it has bounded tree-width, in which case a simple divide and conquer argument gives a hitting set of bounded size.

A recent thread in graph structure has been to generalize classical results to the \textsl{coarse} setting. For hitting versus packing problems, such analogues look at replacing disjointness conditions with large distance ones, and vertices with small radius balls.
In that direction, a natural coarse EPP for cycles was conjectured in~\cite{AGHK25}, and also by Chudnovsky and Seymour (see also~\cite{GP25}): either a graph $G$ contains a packing of $k$ cycles at pairwise distance at least $d$, or there exist $f(k)$ balls of radius $g(d)$ hitting all cycles of $G$. 
Strikingly, this was very recently confirmed by Dujmović, Joret, Micek and Morin~\cite{dujmovic2024erd}, who showed the above with $g(d)=19(d-1)$.
Aiming to push this result to more general structures, Albrechtsen and Davies~\cite{persoDaviesAlbrechtsen} recently conjectured the coarse EPP for planar minors (analogous to~\cite{GMV}): if $G$ does not contain $k$ minor models of $H$ at pairwise distance at least $d$, then $f(k)$ balls of radius $g(d)$ suffice to hit all $H$-minor models. As we will see, \Cref{thm:EP} confirms in a strong sense their conjecture for $d=2$.
A similar conjecture was made in~\cite{AGHK25} corresponding to the case $d=2$ above, but asking to pack \textsl{induced} minor models of a planar graph\footnote{Another positive hitting versus packing result is the recent proof of a coarse Gallai theorem~\cite{distel2026coarse}.}.
Unfortunately, coarse analogues of Menger's theorem, fundamental to much of classical graph structure, were recently ruled out, disproving a conjecture posed in~\cite{GP25} and~\cite{AHJKW24}.
Indeed, Nguyen, Scott and Seymour~\cite{NSS25a} (see also~\cite{NSS25}) showed that for any $d \geq 3, k \geq 3$, and for every $N$, there exist graphs with no $k$ $X,Y$-paths at pairwise distance at least $d$, but no $N$ balls of radius $N$ whose deletion disconnects $X$ from $Y$.

While the lack of a Menger analogue seems to predict a bleak future for coarse graph theory, the case $d=2$ (asking for non-adjacent paths) remains open even with balls of radius $1$. It appears that a wide range of classical results could still be generalized to this \textsl{induced setting}, relating objects constrained by non-adjacency with (closed) neighborhoods\footnote{It is often easy to construct counter-examples ruling out a relation between usual (vertex) hitting sets and induced structure, see~\cite{czyzewska2026induced}, and the first natural relaxation is to allow for neighborhoods.}.
For instance, Gartland and Lokshtanov~\cite{DBLP:phd/us/Gartland23} conjecture that for any planar $H$, graphs forbidding $H$ as an induced minor admit balanced separators consisting of few neighborhoods.
Abrishami, Czyżewska, Kluk, Pilipczuk, Pilipczuk, Rzążewski~\cite{abrishami2025coarsetreedecompositionscoarse} further conjecture that classes with such separators admit tree-decompositions whose bags can be covered by few bounded radius balls. The latter question is still open when asking for few neighbourhoods (see~\cite{chudnovsky2025coarsebalancedseparatorstreedecompositions}), which combined with a strengthening of~\cite{DBLP:phd/us/Gartland23} may yield an induced variant of the grid minor theorem. 
Coming back to packing versus hitting problems, the most pressing question in the induced setting is the induced Menger conjecture, formulated in~\cite{hickingbotham2025induced}, stating that graphs admitting no $k$ pairwise non-adjacent paths between two given vertices admit a set of $f(k)$ neighborhoods disconnecting them.

The \emph{induced Erd\H{o}s--P\'osa property} asks for a set of pairwise non-adjacent copies of an object, or a bounded number of neighborhoods hitting all copies (a \emph{dominating set} of the object). 
In this setting, the first proof of an induced EPP was obtained by Ahn, Gollin, Huynh and Kwon~\cite{AGHK25} for (non-induced) cycles of length at least $t$ with a dominating set of size $O(t k \log k)$.
This is particularly remarkable as the quasilinear dependence on $k$ matches the EPP for cycle packing (for fixed $t$).
In view of the fact that the classical EPP for cycles generalizes to planar minors~\cite{GMV} and~\cite{persoDaviesAlbrechtsen}, we ask the question for the induced EPP: do graphs forbidding $k$ independent copies of $H$ admit a hitting set of $H$-minor models consisting of $f(k)$ neighbourhoods?
%In particular, this question corresponds to the case $d=2$ of the coarse analogue conjectured in~\cite{persoDaviesAlbrechtsen}, strengthened to only allow hitting with radius one balls.

Our first result is that planar minors admit the induced Erd\H{o}s--P\'osa property.
We say a graph $G$ is \emph{$kH$-free} if it does not contain $k$ pairwise non-adjacent (\emph{independent}) $H$-minor models.
\begin{restatable}{theorem}{thmEP}\label{thm:EP}
    Given a planar graph $H$, there exists a function $f$ such that for every $kH$-free graph $G$, there exists $X \subseteq V(G)$ with $|X|\leq f(k)$ such that $G-N[X]$ is $H$-minor-free.
\end{restatable}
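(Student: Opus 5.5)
We argue by induction on $k$; the case $k=1$ is trivial with $X=\emptyset$. Let $G$ be $kH$-free and not $H$-minor-free. It suffices to find a set $Y$ of boundedly many vertices, $|Y|\le g(k,H)$, such that $G-N[Y]$ is $(k-1)H$-free. Induction then gives $X'$ with $|X'|\le f(k-1)$ such that $G-N[Y]-N[X']$ is $H$-minor-free, and $X=Y\cup X'$ works with $f(k)=g(k,H)+f(k-1)$. To find $Y$, we pick an $H$-minor model $M$ that is "small", i.e. of minimal size or otherwise extremal, and try to show that its closed neighborhood can be dominated by few vertices. Any $k-1$ independent models in $G-N[M]$ would be non-adjacent to $M$, giving $k$ independent models, a contradiction. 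So $G-N[V(M)]$ is $(k-1)H$-free. The difficulty is that $V(M)$ may be huge, since branch sets can be long paths, so we cannot take $Y=V(M)$.

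The plan to overcome this goes through the sparse/dense dichotomy suggested in the abstract. \emph{Step 1 (dense case).} We use results on bounded-size dominating sets for minor models: if $G$ contains a large $K_{t,t}$ subgraph, or more generally has high edge density locally, then we directly find either $k$ independent $H$-models or a small set of neighborhoods capturing the dense part. \emph{Step 2 (sparse case).} After removing neighborhoods of a few vertices we may assume $G$ is $K_{t,t}$-subgraph-free. Here the key structural statement is the one announced: a sparse $kH$-free graph has linearly many pairwise independent large protrusions. A protrusion is a piece of bounded tree-width with bounded boundary, and the protrusions are pairwise non-adjacent. We would prove this by using the grid minor theorem inside components. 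A large grid minor together with sparsity yields many independent $H$-models, because in a $K_{t,t}$-free graph the models can be taken far apart in the grid and induced-minor-style arguments (Korhonen's grid induced minor theorem for bounded degree, after a degree reduction) give non-adjacency. Hence the relevant part of $G$ has bounded tree-width modulo a bounded structure. \emph{Step 3.} In bounded tree-width with the independence condition, we apply the standard divide-and-conquer Erd\H{o}s--P\'osa argument on a tree decomposition. We choose a lowest bag $B$ such that the subtree below contains an $H$-model; then the whole part strictly below is $H$-minor-free. Deleting $N[B']$ for a bounded subset $B'$ that dominates the separator, and noting that any model using vertices beyond $N[B]$ is disjoint and non-adjacent from the model below, reduces $k$ by one. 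The bags have bounded size, so the separator consists of boundedly many vertices, and their neighborhoods are exactly what we pay.

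The main obstacle is Step 2, turning a large grid minor in a sparse graph into $k$ pairwise \emph{non-adjacent} $H$-models, and more importantly showing that the non-sparse parts can be hit with boundedly many neighborhoods rather than merely bounded tree-width after deletion. Our first attempt would be to apply the Kühn--Osthus theorem (a $K_{t,t}$-free graph with large average degree contains a subdivision of a high-degree graph, hence an induced subdivision structure). This would let us extract, from a large wall in a $K_{t,t}$-free graph, an induced subdivided wall, whose far-apart subwalls give independent models. If the induction fails to decrease $k$ in some case, we would instead strengthen the hypothesis to bound the dominating number of the union of minimal models, and use the protrusion count as a charging argument.
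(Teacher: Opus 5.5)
\textbf{There is a genuine gap: your Step 2 is false, and Steps 1 and 3 cannot carry the proof without it.}

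You claim that in a $K_{t,t}$-subgraph-free graph a large grid minor yields $k$ pairwise non-adjacent $H$-models, so that the sparse case has bounded tree-width up to a bounded structure. The paper itself points to a counterexample: the death stars of~\cite{BBDEGHTW23}. These graphs have arbitrarily large girth (hence are $K_{2,2}$-free) and arbitrarily large tree-width, yet contain no two non-adjacent cycles.

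\begin{itemize}
\item They contain no large induced subdivided wall, since two far-apart bricks would be non-adjacent cycles.
\item Deleting boundedly many vertices changes tree-width only by a bounded amount, so no bounded vertex set brings them to bounded tree-width.
\item Hence ``bounded tree-width after removing a bounded structure'' can only hold if the structure is a bounded number of closed neighborhoods. That is exactly the statement you are trying to prove.
\end{itemize}

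Your tools cannot bridge this. K\"uhn--Osthus needs large average degree, whereas sparse $kH$-free graphs are degenerate (Theorem~\ref{thm:DVO}). Korhonen's theorem needs bounded maximum degree, and you give no way to reach bounded degree by deleting boundedly many neighborhoods. So Step 3, which is a correct argument for bounded tree-width and connected $H$, is never applicable. The protrusions you invoke never actually enter your proof, and the final sentence about a ``charging argument'' is not a proof.

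\textbf{The missing idea is how protrusions give the bound: they shrink a minimal counterexample rather than build a dominating set.}

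\begin{enumerate}
\item Take a minimum counterexample $G$. It has no $H$-model of size at most $g_L$; otherwise you dominate that model with $g_L$ vertices and induct on $k$.
\item The linear protrusion property (Theorem~\ref{thm:LPPkH}) then gives at least $k$ pairwise independent $p$-protrusions of size at least $L$. For connected $H$, one of them must be $H$-minor-free, since otherwise you would have $k$ independent $H$-models.
\item A minimum $H$-dominating set meets this protrusion in at most $p$ vertices; otherwise you swap those vertices for the boundary.
\item So only bounded information matters: the $(kh+p)$-folio and the $(p,h+p)$-neighborhood folio. The protrusion can therefore be replaced by a smaller one preserving $kH$-freeness and $\gamma_H$ (Lemma~\ref{lem:Nhittingred}), contradicting minimality.
\end{enumerate}

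The linear protrusion property itself is proved without grids:
\begin{itemize}
\item Degeneracy makes the high-degree vertices a small fraction of $G$.
\item Korhonen's theorem is applied only to the bounded-degree part, giving tree-width at most $t$ after deleting boundedly many vertices.
\item Theorem~\ref{thm:SLPPTW} gives linearly many protrusions in that part.
\item Linear neighborhood complexity together with $K_{m,m}$-freeness lifts them to protrusions of $G$.
\end{itemize}

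Finally, disconnected $H$ needs a separate induction on the number of components. Your Step 3 implicitly uses connectivity of $H$, so it does not cover this case.
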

\noindent
To the best of our knowledge, Theorem~\ref{thm:EP} is one of the few results on the EPP for such a general family of objects. Apart from planar minors admitting the EPP~\cite{GMV}, Liu~\cite{Liu22} showed that \textsl{all} minors admit the \say{half-integral} EPP, and Amiri, Kawarabayashi, Kreutzer and Wollan~\cite{amiri2016erdosposapropertydirectedgraphs} showed the EPP for butterfly (or topological) minors of the cylindrical grid in digraphs.
In particular, \Cref{thm:EP} confirms the planar coarse EPP conjecture of~\cite{persoDaviesAlbrechtsen} for $d=2$ in a strong form.
Let us also note that~\cite{czyzewska2026induced} very recently considered the induced EPP for induced \textsl{objects}, showing the induced EPP for long holes and long theta induced minors. They conjecture more generally that for any planar $H$, induced minor models of $H$ satisfy the induced EPP (a strenghtening of~\cite{AGHK25}), which corresponds to replacing \say{minor} with \say{induced minor} in~\Cref{thm:EP}.

The obstacles to coarse graph theory are not the only motivation to understand the induced setting.
On the algorithmic side, the tractability of hard problems in classes forbidding some induced substructure has a long history.
The most studied problem there is Maximum Independent Set, both for exact algorithms and approximations~\cite{chudnovsky2020quasi,bonnet2026qptasmwisfindinglarge,pilipczukQuasiPoly,10.1145/3414473}, with the most general conjecture that planar $H$-induced minor free graphs admit a polynomial-time algorithm for MIS~\cite{DBLP:phd/us/Gartland23}.
When non-adjacency is imposed \textsl{between} objects, as for $kH$-freeness, some geometric problems naturally involve instances that forbid independent substructures.
For example, given as input some finite $X\subseteq \mathbf{R}^3$, the computation of a maximum size subset of $X$ whose elements are pairwise at distance less than 1 admits an EPTAS, see~\cite{BBBCGKRST21} and~\cite{BBBCT18}.
The algorithm consists of computing the Maximum Independent Set in the distance at least one graph, and crucially relies on the fact that such graphs forbid two independent odd cycles.
In this vein, one of the basic open questions is the computation of MIS in polynomial time for graphs without $k$ non-adjacent cycles (i.e. $kK_3$-free graphs). A quasi-polynomial time (QP) algorithm was proposed in \cite{BBDEGHTW23}.
%A wide array of results linking the tractability of MIS with odd cycle packing can be found in \cite{AWZ17}, \cite{BFMR14}, \cite{CFHJW20}, and \cite{FJWY21}.

Our main algorithmic result generalizes the quasi-polynomiality of MIS to graphs forbidding $k$ independent planar minors.
\begin{restatable}{theorem}{thmMIS}\label{thm:MISQP}
    Let $H$ be a planar graph and $k$ be an integer. There is an $n^{O(\log n)}$ time algorithm computing MIS in $kH$-free graphs.
\end{restatable}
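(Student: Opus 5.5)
The plan is the standard branching scheme for MIS, combined with the structural fact announced in the abstract: sparse $kH$-free graphs can be made $H$-minor free by deleting $O(\log n)$ vertices, and therefore have tree-width $O(\log n)$. I would first fix a threshold $d=d(k,H)$ and consider the recursive algorithm $\mathrm{MIS}(G)$, which distinguishes two cases.

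\textbf{Dense case.} Suppose $G$ contains a vertex $v$ of degree at least $n/d$, or more generally a witness of density. Then we branch in the usual way.
\begin{itemize}
\item If $v$ is not in the solution, recurse on $G-v$.
\item If $v$ is in the solution, recurse on $G-N[v]$.
\end{itemize}
The second branch loses a $1/d$ fraction of the vertices, so along any root-to-leaf path there are only $O(d\log n)$ such "in" branches. The "out" branches are more numerous, but the standard Gartland–Lokshtanov style counting applies: a branching tree whose leaves are indexed by paths with at most $O(\log n)$ heavy steps among $n$ total steps has $n^{O(\log n)}$ leaves. Hence this case costs only a quasi-polynomial factor, provided every leaf is "sparse".

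\textbf{Sparse case.} Here "sparse" has to be made to match the hypothesis of the paper's structural result, which presumably concerns bounded average degree or $K_{t,t}$-subgraph-freeness. If the result needs $K_{t,t}$-freeness rather than maximum degree below $n/d$, I would instead branch on a large biclique. Of its two sides, at most one can meet an independent set, so we may delete either side. That deletion removes many vertices, and the same counting again gives $n^{O(\log n)}$. In the sparse case, invoke the structural lemma to get a set $S$ of $O(\log n)$ vertices with $G-S$ being $H$-minor free. Since $H$ is planar, the grid minor theorem~\cite{GMV} bounds the tree-width of $G-S$ by a constant $c_H$. Then $\mathrm{tw}(G)\le c_H+O(\log n)$, and dynamic programming over a tree decomposition (computed approximately in $2^{O(\mathrm{tw})}n^{O(1)}$ time) solves MIS in $2^{O(\log n)}=n^{O(1)}$ time. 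Alternatively, guess $S\cap I$ in $2^{O(\log n)}$ ways and solve the bounded tree-width remainder.

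\textbf{Closure under the recursion.} Throughout, note that $kH$-freeness is closed under vertex deletion, so every recursive instance still satisfies the hypothesis.

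\textbf{Main obstacle.} The hardest step is the structural lemma itself: showing that a sparse $kH$-free graph has a deletion set of size $O(\log n)$ to $H$-minor freeness. The abstract indicates how this goes. Such graphs contain linearly many pairwise independent large protrusions, obtained from tree-width and degeneracy arguments. If $G$ is not $H$-minor free, one repeatedly removes a small separator cutting off a protrusion that contains an $H$-model, or else finds $k$ of them pairwise non-adjacent, contradicting $kH$-freeness. The halving of the instance at each stage yields the logarithmic bound.

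If that lemma is available earlier in the paper, which I assume it is, the remaining work is to make the density notion of the branching match its hypothesis and to do the careful leaf count.
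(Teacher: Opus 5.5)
\paragraph{Verdict.} Your handling of the sparse leaves is fine. Bounding the tree-width by $c_H+O(\log n)$ and running a $2^{O(\mathrm{tw})}n^{O(1)}$ decomposition and dynamic program even makes each leaf polynomial, whereas the paper guesses the logarithmic hitting set. The gap is in reaching those leaves.

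\paragraph{Where the reduction fails.} Theorem~\ref{thm:EP-sparse} requires the instance to have no $K_{r,r}$ subgraph for a \emph{fixed} $r$. Branching on vertices of degree at least $n/d$ does not achieve this. For connected $H$ with at least one edge, take about $n/(2md)$ copies of $K_{m,m}$, add all edges between every two copies, and pad with isolated vertices. The result is $2H$-free, has maximum degree below $n/d$ throughout, and contains linearly many disjoint copies of $K_{m,m}$.

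Your fallback, branching on a biclique by deleting one of its sides, removes only $r=O(1)$ vertices in \emph{both} branches, so neither branch makes multiplicative progress. In this example the union of the copies is a complete multipartite graph with parts of size $m$. It becomes $K_{m,m}$-free only after deleting all but $O(m^2)$ of its vertices. So every root-to-leaf path needs $\Omega(n)$ such binary branchings, giving $2^{\Omega(n)}$ leaves rather than $n^{O(\log n)}$. Note also that your branching phase never uses $kH$-freeness, which is exactly what makes the reduction possible.

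\paragraph{The missing idea.} Measure progress by the number of copies of $K_{m,m}$ in $G$, which is at most $n^{2m}$, rather than by the number of vertices.
\begin{enumerate}
\item Form the auxiliary graph whose vertices are these copies, two copies being adjacent if they intersect or are joined by an edge.
\item Since $H$ is a minor of $K_{m,m}$ and $G$ is $kH$-free, this graph has no independent set of size $k$.
\item Hence some copy $K$ is adjacent to roughly a $1/k$ fraction of all copies.
\item As $|K|=2m$, some $v\in K$ has $N[v]$ meeting at least a $1/(2km)$ fraction of all copies.
\item Branch on this $v$, recursing on $G-v$ and on $G-N[v]$. The second branch destroys a constant fraction of the copies.
\end{enumerate}
So each root-to-leaf path contains only $O(km^2\log n)$ branches of the second kind among at most $n$ steps. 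This gives $n^{O(\log n)}$ leaves, each $kH$-free with no $K_{m,m}$ subgraph, where Theorem~\ref{thm:EP-sparse} applies.
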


The MIS algorithm for $kK_3$-free graphs in~\cite{BBDEGHTW23} is based on the fact that sparse $kK_3$-free graphs (i.e. avoiding some fixed $K_{r,r}$ subgraph) have logarithmic feedback vertex set (and thus logarithmic tree-width). Therefore MIS can be computed by dynamic programming in polynomial time. Unfortunately, reducing to the sparse case incurs a QP branching preprocess. Our QP algorithm for MIS in $kH$-free graphs follows exactly these lines. We start with a reduction to the sparse case via QP branching, and then apply our second main result: sparse $kH$-free graphs have logarithmic $H$-hitting set.

\begin{restatable}{theorem}{thmLOG}\label{thm:EP-sparse}
    Given a planar graph $H$, there exists a function $f$ such that for every $kH$-free graph $G$ on $n$ vertices with no $K_{r,r}$ subgraph, $G$ admits an $H$-hitting set of size at most $f(k,r) \log n$.
\end{restatable}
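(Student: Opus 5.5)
We prove \Cref{thm:EP-sparse} by induction on $n$, using the fact announced in the abstract: a sparse $kH$-free graph contains linearly many pairwise independent large protrusions. Concretely, we aim for the following \emph{protrusion lemma}. There are constants $c=c(k,r,H)>0$ and $t=t(k,r,H)$ such that every $K_{r,r}$-subgraph-free, $kH$-free graph $G$ on $n$ vertices which is not $H$-minor-free has one of two features. Either it contains a set $S$ of at most $t$ vertices such that $G-S$ loses at least $cn$ vertices compared to its $H$-relevant part, or it has at least $cn$ vertices lying in parts of bounded tree-width attached through boundaries of size at most $t$.

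Given this lemma, the recursion is routine. We pick $O(1)$ vertices to delete so that a constant fraction of the graph becomes irrelevant to $H$-minor models. The irrelevant parts are then either $H$-minor-free components or protrusions that can be cut off and handled by a bounded hitting set. Recursing on the remainder gives $T(n)\le T((1-c)n)+t$, hence $T(n)=O(\log n)$.

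To establish the protrusion lemma we use the grid theorem. Since $G$ is $kH$-free, $G$ does not contain a huge grid minor with well-separated branch sets. Indeed, a large grid minor in a $K_{r,r}$-subgraph-free graph should yield $k$ \emph{independent} $H$-models. The plan is to take a $W$-wall minor and pick $k$ far-apart subwalls, each containing an $H$-model. Adjacency between the models comes from edges of $G$ outside the minor. Sparsity, combined with a Ramsey/Kővári–Sós–Turán style argument, lets us find many subwalls with few edges between them, and then select $k$ pairwise non-adjacent ones. So bounded tree-width should hold \emph{locally}, but globally the tree-width may be $\Theta(\log n)$, as happens in expanders of girth $\Theta(\log n)$.

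The intended mechanism is therefore the following. Contract the $H$-minor-free parts, or consider a maximal family of vertex-disjoint $H$-models of bounded size. Sparse graphs without $K_{r,r}$ have bounded expansion-like degeneracy (Kühn–Osthus), so many $H$-models can be chosen small, with size $O(\log n)$ or even $O(1)$. Take a maximal independent family $\mathcal{M}$ of models; by hypothesis $|\mathcal{M}|<k$. The closed neighbourhood $N[\bigcup\mathcal{M}]$ then hits all $H$-models. Its size must be controlled by degeneracy and by short models, roughly $O(\log n)$ vertices, because in a degenerate graph with many vertices of degree at least $3$ there is a cycle, and in fact an $H$-model, of length $O(\log n)$ (Erdős–Pósa-type girth bounds generalised to minors). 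Iterating this relative to low-degree vertices gives the logarithmic hitting set.

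The main obstacle is controlling $|N[\cdot]|$: in a sparse graph neighbourhoods can still have high degree. I would first bound the number of high-degree vertices ($K_{r,r}$-freeness together with $kH$-freeness should give that high-degree vertices form a set of size $O(\log n)$, via a degree-based counting argument) and put them directly in the hitting set. After removing them we work in a bounded-degree graph, where $N[X]$ has size $O(|X|)$. In that setting, logarithmic-length $H$-models exist by Moore-bound arguments, and the maximal independent packing argument gives size $f(k,r)\log n$.
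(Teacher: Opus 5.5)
There is a genuine gap, and it sits at the step you call routine. An $H$-minor-free protrusion $(A,B)$ is not irrelevant to $H$-minor models. A model can enter and leave $B$ through $A$, so $B$ cannot simply be discarded, and ``cutting it off'' means deleting $A$. With linearly many protrusions, each with a boundary of size up to $p$, this costs $\Theta(n)$ vertices in general (think of a heavily subdivided graph whose protrusions are the subdivided edges). So $T(n)\le T((1-c)n)+t$ does not follow.

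The missing idea is protrusion \emph{replacement}. Because $H$ is connected and $G[B]$ is $H$-minor-free, a minimum $H$-hitting set meets $A\cup B$ in at most $p$ vertices (otherwise swap that part for $A$). Hence everything relevant about $B$ is captured by boundedly many types: the $(kh+p)$-folio and the $(p,h+p)$-folio set. So every protrusion of size at least some $L$ can be replaced by a strictly smaller one with $kH$-freeness and $\tau_H$ \emph{exactly} preserved (Lemma~\ref{lem:hittingred}). Replacing all linearly many independent $H$-minor-free protrusions at once shrinks the graph by a factor $1-c_L/2$ at zero cost to the hitting set.

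The additive constant per round comes from elsewhere. Lemma~\ref{lem:sparsification} deletes a bounded set to push the $H$-girth above $g_L$, so that Theorem~\ref{thm:LPPkH} applies again. You must also check that the reduced graph stays $K_{2p,2p}$-free so the induction hypothesis applies; the paper preserves the $(kh+4p)$-folio for this. Disconnected $H$ needs a separate argument, since the exchange step uses connectivity.

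Your route to the protrusion lemma also fails. The claim that a large grid minor in a $K_{r,r}$-subgraph-free graph yields $k$ independent $H$-models is false. There are graphs of arbitrarily large girth and tree-width with no two independent cycles (the death stars of~\cite{BBDEGHTW23}); they have arbitrarily large grid minors yet are $2K_3$-free. Indeed, if your claim held, sparse $kH$-free graphs would have bounded tree-width, contradicting your own remark about $\Theta(\log n)$ tree-width.

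The fallback in your last two paragraphs breaks as well. A disjoint union of $\sqrt n$ stars with $\sqrt n$ leaves each is a $K_{2,2}$-free forest, hence $kK_3$-free, yet it has $\sqrt n$ vertices of degree $\sqrt n$. With a constant degree threshold one easily gets linearly many such vertices. So high-degree vertices can be neither bounded by $O(\log n)$ nor thrown into the hitting set. Moore-type bounds for short models also need a minimum-degree condition you do not have. Moreover, in the bounded-degree regime Theorem~\ref{thm:KOR} already gives a \emph{bounded} hitting set, so all the difficulty lies in the high-degree part, which your argument does not address.

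The paper handles high degree only through degeneracy:
\begin{enumerate}
\item Vertices of degree more than $d$ form at most a $2\delta/d$ fraction of $G$.
\item By Korhonen's theorem, deleting $O(1)$ further vertices leaves the rest with tree-width at most $t$.
\item Theorem~\ref{thm:SLPPTW} gives linearly many large independent $2t$-protrusions in that part.
\item Linear neighbourhood complexity plus large $H$-girth forces a constant fraction of these protrusions (or of groups of their small components) to have at most $m$ neighbours among the high-degree vertices. This yields Theorem~\ref{thm:LPPkH}.
\end{enumerate}
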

\noindent
Since removing an $H$-hitting set leaves a graph with bounded tree-width, this result directly implies that sparse $kH$-free graphs have logarithmic tree-width. For more classes having the logarithmic tree-width property, see \cite{ACHS22}.

Regarding~\Cref{thm:MISQP}, let us highlight one of the tantalizing questions of this domain, which is a weakening of~\cite[Conjecture 1.4.2]{DBLP:phd/us/Gartland23}, and remains open in the $2K_3$-free case.
\begin{conjecture}
    For any planar $H$ and any $k$, the Maximum Independent Set problem is polynomial-time solvable in the class of $kH$-free graphs.
\end{conjecture}

Another interesting algorithmic question regarding $kH$-free graphs is that of recognition. There, a polynomial time algorithm for $H=K_3$ was shown by Nguyen, Scott and Seymour~\cite{NGU24}.

\paragraph{Structure of the paper}
We end this introduction with overviews of~\Cref{thm:EP}, \Cref{thm:EP-sparse} and~\Cref{thm:MISQP}, all based on the \emph{linear protrusion property} (\Cref{thm:LPPkH}), which is the cornerstone of our results.
In Section~\ref{sec:prelim} we introduce the main definitions and provide a deeper overview of protrusion reduction and the linear protrusion property.
In Section~\ref{sec:realprot}, we show how protrusions can be reduced. In Section~\ref{sec:LPP}, we introduce our main tool, the linear protrusion property. In Section~\ref{sec:EPP}, we show the induced version of the EPP for planar minors. In Section~\ref{sec:MIS}, we give a QP algorithm for MIS in $kH$-free graphs, based on the logarithmic $H$-hitting set property.

\subsection{Overview of the induced Erd\H{o}s--P\'osa property for planar minors}\label{ssec:overview-EPP}

For the sake of exposition, we assume here that $H$ is connected, and show how to derive the disconnected case in~\Cref{sec:EPP}.
A natural strategy to show that a (huge) bounded number of neighborhoods hit all $H$-minor models in a $kH$-free graph is to consider a minimum counterexample $G$, and argue that some reduction can be performed.
Our first step is to reduce to the sparse case. Say that the \emph{$H$-girth} of $G$ is the size of the smallest $H$-minor model in $G$ (the $K_3$-girth then corresponds to the usual girth).
Note that if there is a small $H$-minor model $\mathcal{H}$ in $G$, the graph $G\setminus N[\mathcal{H}]$ is $(k-1)H$-free, thus $G$ is not a minimum counterexample (since we can choose a huge bound). Thus we can assume that $G$ has arbitrarily high $H$-girth.

An important remark is that there are sparse $kH$-free graphs with unbounded tree-width. Examples of high girth and high tree-width graphs with no two independent cycles (later on called \emph{death stars} for unknown reasons) can be found in~\cite{BBDEGHTW23}. So $G$ can still be rather complex, and we need to reduce it further. Isolated or degree one vertices can be deleted, so we can assume that the minimum degree is 2.
Then, a natural toy-problem is the following: how to reduce a $kK_3$-free graph with high girth? 

It turns out that such graphs have long proper paths, i.e. with internal vertices of degree 2 (see technical overview in Section~\ref{sec:lin}).
This shows Theorem~\ref{thm:EP} for $kK_3$-free graphs, since one can contract an edge in a long proper  path, contradicting the minimality of the counterexample. More generally, every $kH$-free graph with large $H$-girth has a $p$-vertex cutset $A$ partitioning $V(G)\setminus A$ into two arbitrarily large sides $V_1,V_2$
(for some fixed $p$, see Section~\ref{sec:lin}). Note that if both $V_1$ and $V_2$ contain an $H$-minor model, then they are both $(k-1)H$-free, contradicting the minimality of the counterexample (if the bound is chosen larger than twice the bound for the $(k-1)H$-free case plus $p$). Thus we can assume that $G[V_1]$ is arbitrarily large, $H$-minor free (thus of bounded tree-width) and has boundary size at most $p$. This is the definition of a $p$-protrusion $P$, with boundary $A$, a very popular tool in FPT algorithms, see~\cite{FL89}, \cite{FLMS12}, and  \cite{KLPRRSS15}. The key-property of large $p$-protrusions is that they can be reduced while preserving some properties of $G$. Therefore, similarly to the contraction of a proper path in the $kK_3$-case, the protrusion $P$ can be reduced, thus contradicting the minimality of $G$. The full proof of existence of large $p$-protrusions can be found in Section~\ref{sec:LPP}, Theorem~\ref{thm:LPPkH}.

Let us hint at how to reduce the $p$-protrusion $G[V_1]$. The crucial observation is that no minimum size set $X$ such that $N[X]$ hits all $H$-minor models intersects $V_1$ on more than $p$ vertices, otherwise swapping $X\cap V_1$ for the boundary of $V_1$ would be a smaller hitting set. So we can encode all the possible types of intersections of $G[V_1]$ and the optimal solution in a bounded way. Therefore if $V_1$ is too large, it can be replaced by a smaller graph. The technical overview of this protrusion reduction argument is discussed in Section~\ref{sec:prot}, and then proved in Section~\ref{sec:realprot}. 

\subsection{Overview of the logarithmic hitting set, and of the QP algorithm for MIS}\label{ssec:overview-MIS}

To compute MIS, we follow our usual recipe and preprocess our input graph $G$ to reduce it to (not too many) sparse instances.
Let us illustrate this for $2H$-free graphs. Since a small $H$-minor model $\mathcal{H}$ (say of size at most $s$) is adjacent to all other $H$-minor models, there is a vertex $v$ of $\mathcal{H}$ which is a neighbor of at least a fraction of $1/s$ of the number of small $H$-minor models. Thus, deleting $N[v]$ reduces the total number of small $H$-minor models by a factor $1-1/s$ (and there are at most $n^s$ of them). Therefore, choosing $v$ as a branching vertex to compute MIS leads to a recursion tree with a quasi-polynomial number of leaves, each leaf corresponding to a graph without a small $H$-minor model. We now need to solve MIS for sparse $kH$-free graphs.

The key is Theorem~\ref{thm:EP-sparse} which shows that sparse $kH$-free graphs have a logarithmic size hitting set $X$ of all $H$-minor models. With this in hand, we just run over all possible sets $X$ and all possible intersections $I$ of $X$ with a MIS, and extend $I$ to $G\setminus X$ (which has bounded tree-width since $H$ is planar) using dynamic programming. This again takes quasi-polynomial time. 
Therefore the main difficulty is to prove  Theorem~\ref{thm:EP-sparse}. The first step is to observe that the proof of the existence of a large $p$-protrusion in $kH$-free graphs with large $H$-girth gives in fact a linear number of independent large $p$-protrusions, see Section~\ref{sec:lin} for a technical overview and Theorem~\ref{thm:LPPkH}. 

The minimum size $L$ of our $p$-protrusions is set in such a way that each of them can be reduced to a smaller one while both preserving $kH$-freeness and the size of a minimum $H$-hitting set (see the overview in~Section~\ref{sec:prot}).
We can then reduce all at once this linear set of protrusions, in order to form a new graph $G'$ which has same size of minimum $H$-hitting set. Note that the number of vertices of $G'$ is a fraction of $|V(G)|$ since we strictly reduced a linear number of sets. The problem is that the $H$-girth of $G'$ has decreased, and we cannot apply 
Theorem~\ref{thm:LPPkH} again.
 Crucially, note that we are not in the process of \emph{computing} an $H$-hitting set of $G$, but just showing that one of logarithmic size exists. We can then focus on $G'$ and forget $G$.

The engine of our algorithm is a sparsification step, based on the fact that $G'$ cannot become too dense. In particular, if the value of $p$ is chosen (among other properties) such that $K_{p,p}$ contains an $H$-minor model, the graph $G$ does not contain $K_{2p,2p}$ (since this would be a small $H$-minor model). Thus our parallel $p$-protrusion reduction from $G$ to $G'$ also leaves $G'$ without $K_{2p,2p}$ subgraphs. Indeed, $K_{2p,2p}$ cannot cross the boundary of a $p$-protrusion, and it cannot live inside a $p$-protrusion since protrusion reduction respects the (non) existence of  $K_{2p,2p}$ subgraphs. Therefore, while we may not control the $H$-girth of $G'$, at least $G'$ is still somewhat sparse.

In the sparsification step (which is very classical in this domain), we delete a bounded number of vertices to reduce $G'$ to a new graph with no small $H$-minor model. If we can indeed achieve this, and repeat the process, we will iteratively delete $O(\log n)$ times a bounded set of vertices, showing that our starting graph $G$ has a logarithmic size $H$-hitting set. 
To finish the argument, note that if a $kH$-free graph has no $K_{2p,2p}$ subgraph, then one can delete $f(k,\ell,p)$ vertices to suppress all small $H$-minor models. It suffices to observe that a maximum size $N$ packing of small $H$-minor models induces a graph with $\Omega(N^2)$ edges and $O(N)$ vertices: every subset of $k$ disjoint small $H$-minor models contains an edge joining two of them. Such a quadratic density gives a $K_{2p,2p}$ if $N$ is large enough. So $N$ is bounded, and therefore deleting at most $N$ small $H$-minor models  leaves $G'$ with large $H$-girth.

\paragraph{Concluding remarks.}
Our proofs virtually never consider the structure of $H$, barely that of $G$, and do not effectively construct any hitting or dominating set. This is graph theory without graphs, just consisting of parameter tuning.
It is quite surprising to obtain some results and  algorithms without any insight on the fine-grained structure of $kH$-free graphs, apart from the crucial linear protrusion property. Compared to the original EPP of planar graphs, in which tree-width fully explains the phenomenon, this study does not (yet?) reveal any new complexity or width parameter hidden behind induced packing of planar graphs. Still, the linear protrusion property can be a useful tool for other graph classes.

\section{Preliminaries and overview of the linear protrusion property}\label{sec:prelim}

\subsection{Preliminaries}

We generally assume in this paper that $G=(V,E)$ is a graph on $n$ vertices. When $X$ is a subset of vertices of $G$, we denote by $G[X]$ the subgraph of $G$ \emph{induced} by $X$. We denote by $N(v)$ the neighborhood of a vertex $v$, and by $N[v]$ the closed neighborhood $N(v)\cup \{v\}$. The neighborhood of a set of vertices $X$ is $N(X):=(\bigcup_{x\in X} N(x))\setminus X$ and its closed neighborhood is 
$N[X]:=\bigcup_{x\in X} N[x]$.

Two disjoint sets of vertices $X,Y$ in a graph $G$ are \emph{independent} if there is no edge $xy$ such that $x\in X$ and $y\in Y$. We also call \emph{independent} a family $\mathcal{F}$ of pairwise independent subsets. When all subsets in $\mathcal{F}$ are singletons, we speak of an \emph{independent set} of $G$. It is a \emph{maximum independent set} (MIS) of $G$ if it has maximum size. 

A \textsl{class} $\mathcal C$ of graphs is always supposed to be closed by taking induced subgraphs. A class $\mathcal C$ is \emph{sparse} if there exists $r$ such that no graph in $\cal C$ contains $K_{r,r}$ as a subgraph. A class $\mathcal C$ is \emph{$\delta$-degenerate} if every graph in $\cal C$ on $n$ vertices has at most $\delta n$ edges. A class $\mathcal C$ has \emph{linear neighborhood complexity} with parameter $c$ if for every graph $G$ in $\cal C$ and for every non empty subset $X$ of vertices of $G$, the size of $\{N(v)\cap X:v\in V(G)\}$ is at most $c.|X|$. In other words, the number of distinct neighborhoods of vertices of $G$ in $X$ is linear in the size of $X$.

When $H$ is some graph, an \emph{$H$-minor model} in $G$ is a set $\mathcal{H}$ of $h:=|V(H)|$ disjoint connected subsets $\{X_1,\dots ,X_h\}$ of $G$ such that $G[\cup \mathcal{H}]/\mathcal{H}$ contains $H$ as a (spanning) subgraph. When $G[\cup \mathcal{H}]/\mathcal{H}$ is isomorphic to $H$, we speak of \emph{induced minor model}. A graph $G$ is \emph{$H$-minor-free} if it does not contain $H$ as a minor. By extension, a 
class $\mathcal{C}$ is 
\emph{$H$-minor free} if all graphs in $\mathcal{C}$ are $H$-minor free.
In a graph $G$, an \emph{$H$-hitting set} is a set $X$ of vertices such that $G\setminus X$ is $H$-minor free. We denote by $\tau_H(G)$ the minimum size of an $H$-hitting set. An \emph{$H$-dominating set} is a set $X$ of vertices such that $G\setminus N[X]$ is $H$-minor free. We denote by $\gamma_H(G)$ the minimum size of an $H$-dominating set.

Given some graph $H$ and an integer $k$, a graph $G$ is \emph{$kH$-free} if for any set $\mathcal{H}_1,\dots ,\mathcal{H}_k$ of disjoint $H$-minor models in $G$, there are some $i\neq j$ such that there exists an edge between $\mathcal{H}_i$ and $\mathcal{H}_j$. We will also say that there are no $k$ disjoint \emph{independent} $H$-minor models in $G$. By extension, a class $\mathcal C$ is \emph{$kH$-free} if  all $G$ in $\mathcal C$ are $kH$-free.

\begin{theorem}\label{thm:RSgrid}
     For every planar graph $H$, $H$-minor free graphs have bounded tree-width. 
\end{theorem}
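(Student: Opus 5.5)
This statement is the Robertson--Seymour grid minor theorem, and it is quoted in the paper as a black box from~\cite{GMV}. Still, here is how I would structure a proof.

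The plan splits into two steps. First, show that every planar graph $H$ is a minor of a sufficiently large grid. Second, show that graphs of large tree-width contain large grid minors. Together these give the statement: if $G$ is $H$-minor free, then $G$ has no $g\times g$ grid minor for $g=g(H)$, so its tree-width is at most $w(g)$.

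For the first step, I would take a planar embedding of $H$. I would then subdivide edges and replace each vertex by a small tree, so that every vertex has degree at most $3$; $H$ is a minor of the resulting graph, which remains planar. Next I would draw this graph orthogonally on a fine grid, mapping vertices to grid points and edges to internally disjoint grid paths. Such orthogonal drawings exist for planar graphs of maximum degree at most $4$. Contracting the paths then exhibits $H$ as a minor of a $g\times g$ grid, with $g=O(|V(H)|+|E(H)|)$.

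The second step is the main obstacle. I would use the duality between tree-width and brambles, or equivalently well-linked sets. If $\mathrm{tw}(G)$ is huge, then $G$ contains a large well-linked set $W$. From $W$ I would extract two large families $\mathcal{P}$ and $\mathcal{Q}$ of disjoint paths, using Menger's theorem between large disjoint subsets of $W$. The goal is to arrange that every path of $\mathcal{P}$ meets every path of $\mathcal{Q}$; for that I would use the Robertson--Seymour iteration, or the Chekuri--Chuzhoy path-of-sets system. I would then reroute to obtain a crossing pattern in which each $P_i$ meets the $Q_j$ in order. A Ramsey-type argument, such as Erd\H{o}s--Szekeres on the intersection orders, extracts a large subfamily with consistent orders, and this yields a grid minor, or at least a wall, which contains a grid minor.

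The delicate part is the rerouting so that intersections occur in a consistent order, and this is where I expect most of the work to lie. In the context of this paper, I would simply cite~\cite{GMV} rather than reprove it.
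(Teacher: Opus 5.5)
Your proposal is correct and matches the paper, which also proves this statement only by citing the Robertson--Seymour grid minor theorem (Graph Minors V). Your two-step outline, that every planar $H$ is a minor of a sufficiently large grid and that large tree-width forces a large grid minor, is exactly what that citation provides, and you too end by deferring to the reference rather than reproving it.
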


\begin{proof}
    This follows from Robertson and Seymour grid minor theorem~\cite{GMV}.
\end{proof}

\begin{theorem}\label{thm:DVO}
    Every sparse $kH$-free class $\cal C$ is degenerate and has linear neighborhood complexity. 
\end{theorem}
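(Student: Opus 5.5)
The plan is to reduce both conclusions to known theorems about classes excluding a fixed graph as an \emph{induced subdivision}, together with the standing assumption that $K_{r,r}$ is excluded as a subgraph.

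\textbf{Step 1: $kH$-free graphs exclude a fixed induced subdivision.} Let $F$ be the disjoint union of $k$ copies of $H$. Suppose $G$ contains an induced subgraph $S$ isomorphic to a subdivision of $F$. Then $S$ splits into $k$ vertex-disjoint pieces $S_1,\dots,S_k$, each a subdivision of $H$. Since $S$ is induced and contains no edge between distinct $S_i$, the sets $V(S_i)$ are pairwise independent in $G$. Each $S_i$ carries an $H$-minor model: take one branch vertex, and assign each subdivision path to one of its endpoints. This gives $k$ independent $H$-minor models, contradicting $kH$-freeness. Hence every graph of $\mathcal C$ excludes every subdivision of $F$ as an induced subgraph, and also excludes $K_{r,r}$ as a subgraph.

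\textbf{Step 2: degeneracy.} I would invoke the theorem of Kühn and Osthus: for every graph $F$ and integer $r$, graphs with no $K_{r,r}$ subgraph and no induced subdivision of $F$ have average degree bounded by a function of $F$ and $r$. Since $\mathcal C$ is closed under induced subgraphs, this bound applies to every member, so $\mathcal C$ is $\delta$-degenerate for some $\delta=\delta(H,k,r)$.

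\textbf{Step 3: linear neighborhood complexity.} This is the main obstacle, because degeneracy alone does not give linear neighborhood complexity. I would first upgrade to bounded expansion. By Dvořák's theorem on induced subdivisions and bounded expansion, a hereditary class that has bounded degeneracy and excludes some fixed graph as an induced subdivision has bounded expansion. Steps 1 and 2 supply both hypotheses for $\mathcal C$, so $\mathcal C$ has bounded expansion. Then the result of Reidl, Sánchez Villaamil and Stavropoulos applies: classes of bounded expansion have linear neighborhood complexity, i.e. $|\{N(v)\cap X : v\in V(G)\}|\le c|X|$ for some constant $c$ and every nonempty $X$.

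If the exact form of Dvořák's theorem turns out to require a slightly different hypothesis, the fallback would be a direct argument. Degeneracy already bounds the number of distinct traces $N(v)\cap X$ of size at most one by $O(|X|)$. For larger traces, I would contract the vertices realizing them into a bounded-depth minor over $X$ and show it is sparse, reusing the exclusion of induced subdivisions of $F$. I expect this to be the delicate part, which is why I would rely on the bounded-expansion route first.
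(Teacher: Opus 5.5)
Your proposal is correct and follows essentially the same route as the paper. Both arguments observe that a $kH$-free graph contains no induced subdivision of the disjoint union of $k$ copies of $H$. Both then invoke Dvo\v{r}\'ak's theorem to get bounded expansion, and hence degeneracy, together with a known result that bounded expansion implies linear neighborhood complexity. The paper cites Gajarsk\'y et al.\ for that last step, where you cite Reidl, S\'anchez Villaamil and Stavropoulos.

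Your separate appeal to K\"uhn--Osthus for degeneracy is harmless but redundant. Dvo\v{r}\'ak's theorem already needs only $K_{r,r}$-freeness, and bounded expansion implies degeneracy. Likewise, the fallback direct argument is never needed.
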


\begin{proof}
Dvo\v{r}\'ak~\cite{Dvorak} showed that every sparse class not inducing some subdivision of a fixed graph $F$ has bounded expansion, therefore it is degenerate, and moreover, it has linear neighborhood complexity, as shown by Gajarský et al.~\cite{GAJARSKY2017219}. The conclusion follows since $\cal C$ does not induce a subdivision of the disjoint union of $k$  independent copies of $H$.
\end{proof}

\begin{theorem}\label{thm:KOR}
    There exists a function $g$ such that $\tau_H(G)\leq g(k,d)$ for every $kH$-free graph $G$ with maximum degree $d$.
\end{theorem}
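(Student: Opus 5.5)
We argue by induction on $k$, with $k=1$ trivial since then $G$ is $H$-minor free and $\tau_H(G)=0$. Assume $g(k-1,d)$ exists and let $G$ be $kH$-free with maximum degree $d$. If $G$ is $H$-minor free we are done. Otherwise we look for an $H$-minor model $\mathcal{H}$ that is \emph{small}, say with $|\bigcup\mathcal{H}|\le s$ for a constant $s=s(k,d,H)$. Since the degree is at most $d$, we have $|N[\bigcup\mathcal{H}]|\le (d+1)s$. Moreover $G-N[\bigcup\mathcal{H}]$ is $(k-1)H$-free: any $k-1$ independent models there, together with $\mathcal{H}$, would give $k$ independent models. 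Deleting $N[\bigcup\mathcal{H}]$ and applying induction then gives $\tau_H(G)\le (d+1)s+g(k-1,d)$.

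So the whole task is to produce a small model, and we handle the remaining case by considering a minimal $H$-minor model $\mathcal{H}$ of $G$. If $|\bigcup\mathcal{H}|$ is large, we use Theorem~\ref{thm:RSgrid} differently. We let $w$ be the tree-width bound for $H$-minor free graphs and split into two cases.

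\textbf{Case 1: $\mathrm{tw}(G)$ is at most some large $W=W(k,d,H)$.} Then the standard Robertson--Seymour divide-and-conquer applies. Since $G$ has no $k$ vertex-disjoint models that are also independent, we want to bound the number of disjoint models. Bounded degree helps here: if $G$ has $M$ pairwise disjoint $H$-models, each of size at most $s'$, their closed neighbourhoods have size at most $(d+1)s'$. A greedy argument then yields $M/((d+1)s')^2$ independent ones, roughly by building a conflict graph whose degrees are bounded.

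The obstacle is that the models need not be small. The fix is to take models minimal inside a bounded-tree-width piece, using the balanced-separator form of tree-width. Concretely, take a tree decomposition of width $W$. Walking up from a leaf, find the lowest bag $B$ whose subtree part $G_B$ contains an $H$-model, delete $N[B]$, which has at most $(d+1)(W+1)$ vertices, and recurse on the rest. Each step removes a model whose surroundings are separated from the rest after deleting $N[B]$. Hence the removed parts are pairwise independent, so at most $k-1$ rounds occur, giving $\tau_H(G)\le (k-1)(d+1)(W+1)$.

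\textbf{Case 2: $\mathrm{tw}(G)>W$.} By the grid minor theorem, $G$ contains a huge grid (or wall) as a minor. By bounded degree, it in fact contains a large wall as a subgraph (topological minor), since a grid minor in a graph of maximum degree $d$ yields a subdivided wall of size polynomially related. In a large subdivided wall, we take $k$ subwalls that are far apart in the wall, separated by many rows and columns. Each contains $H$ as a minor since $H$ is planar. The difficulty is that they may be adjacent through edges of $G$ outside the wall. We resolve this by taking the subwalls far apart in $G$-distance instead. Bounded degree implies that balls of radius $2$ around a subwall of size $t$ have size at most $t\,d^2$. Choosing $k$ subwalls greedily, each time discarding those within distance $2$ of chosen ones, keeps enough of the wall available as long as the wall is much larger than $k\,t\,d^2$ times a cover factor. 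Possibly a Ramsey-type selection among many disjoint subwalls is needed, using that each subwall is adjacent to only $O(t d)$ others. This gives $k$ independent $H$-models, a contradiction.

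Thus only Case 1 occurs, and $g(k,d)=(k-1)(d+1)(W+1)$ works. Combining the cases, the main step to nail down is this independence selection in Case 2. It is a bounded-degree conflict-graph argument: there are many disjoint subwalls, each conflicting with few others, so an independent set of size $k$ exists.
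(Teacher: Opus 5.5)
\textbf{Main gap: the independence selection in Case~2 fails.} This step is the heart of the statement, and the argument you sketch does not work. The wall you get is a \emph{subdivided} wall in $G$. A subwall with few branch vertices can therefore contain arbitrarily many vertices of $G$ on its subdivision paths. Bounded degree only bounds the neighbourhood of a subwall in terms of that unbounded vertex count. So neither the greedy discarding of radius-$2$ balls nor the claim that each subwall conflicts with only $O(td)$ others is justified.

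The claim is in fact false. Take a large wall and subdivide every edge $L$ times, with $L$ larger than the number of wall edges. Then, for every pair of wall edges, add one edge between fresh subdivision vertices of their two paths. This graph has maximum degree $3$ and large tree-width, yet every two subwalls are joined by an edge of $G$. Hence choosing subwalls of this wall can never produce even two independent models.

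Obtaining $k$ pairwise non-adjacent $H$-models from large tree-width and bounded degree is a genuinely nontrivial fact, and it is the missing idea. The paper gets it from Korhonen's grid induced minor theorem: maximum degree $d$ and tree-width at least $f(N,d)$ force an $N\times N$ grid as an \emph{induced} minor. In an induced minor model, branch sets of grid vertices that are non-adjacent in the grid have no edges between them in $G$. So far-apart subgrids directly give $k$ independent $H$-models. Once tree-width is bounded this way, a divide-and-conquer step like your Case~1 finishes. The paper instead picks a bag $B$ such that every component of $G\setminus B$ is $(k-1)H$-free and inducts on $k$.

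\textbf{Secondary gap: Case~1 needs $H$ connected.} The statement is meant for arbitrary planar $H$, and the paper proves the disconnected case separately. If $H$ is disconnected, the model found in the lowest $G_B$ can avoid $B$ altogether, with its components in different child subtrees. For example, take $H=2K_3$ with two disjoint cycles lying in two different children. Deleting $N[B]$ then does not destroy this model, and the next round can find it again inside $G_B\setminus N[B]$. So the collected models are not pairwise independent, and the ``at most $k-1$ rounds'' bound breaks. The paper handles this by induction on the number of components of $H$: when there are many disjoint $H_1$-models, bounded separators split off many components each containing $H_1$, which forces $G$ to be $k(H\setminus H_1)$-free. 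For connected $H$, your Case~1 argument is correct.
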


\begin{proof}
    Korhonen~\cite{KORHONEN2023206} showed that there exists $f$ such that every graph with maximum degree $d$ and tree-width at least $f(N,d)$ contains a $N\times N$ grid as an induced minor. To show that $g$ exists, we set $N$ such that every  $N\times N$ grid induced minor contains $k$ independent $H$-minor models.
    It follows that $G$ has a tree-decomposition $T$ with width less than $f(N,d)$.
    When $H$ is connected, $T$ has a bag $B$ such that every component of $G \setminus B$ is $(k-1)H$-free. This follows by the standard argument orienting the edges of $T$ towards the side that contains a $(k-1)H$ and picking a sink. Then, we get $g(k,d)=(k-1)f(N,d)$ by induction on $k$.
    
    We now deal with the case where $H$ has connected components $H_1,\dots ,H_C$ with $C \geq 2$, and assume that we have shown that a function $g(k,d)$ exists for any $H$ with less than $C$ components.
    We can assume that $G$ contains $f(N,d)kC$ disjoint $H_1$ minors, otherwise we conclude by the connected case.
    Note that if a connected component $G_i$ of $G$ contains at least $f(N,d)+1$ copies of $H_1$, there is a separator $S$ of size at most  $f(N,d)$ such that $G_i \setminus S$ contains at least two components with a copy of $H_1$.
    Removing $S$ from $G$, and iterating this process increases the number of connected components of $G$ containing $H_1$ by at least one, and decreases the maximum number of disjoint copies of $H_1$ by at most $f(N,d)$. In turn, this process can be repeated for $kC$ iterations, such that the resulting $G$ contains at least $kC$ connected components each containing a copy of $H_1$, call those \emph{good}. 
    Now, if $G$ contains $k$ independent copies of $H\setminus H_1$, their union intersects at most $k(C-1)$ good components. Therefore, since there are at least $k$ good components which are not intersected, we can add $k$ independent copies of $H_1$ to form $k$ independent copies of $H$.
    Thus $G$ is $k(H\setminus H_1)$-free and we can conclude by induction.
\end{proof}

We now introduce some parameters related to our planar graph $H$. We set these (global) variables for the rest of the paper. 
\begin{itemize}
    \item $h$ is the number of vertices of $H$ and  $m$ is the maximum between $h$ and $|E(H)|$. Note then that $H$ is a minor of the complete bipartite graph $K_{m,m}$.
    \item  $t$ is the maximum tree-width of an $H$-minor free graph, and $p:=m+2t$ will be the boundary size of our protrusions.
    \item By Theorem~\ref{thm:DVO}, $kH$-free graphs $G$ with no $K_{m,m}$ subgraph have linear neighborhood complexity. Let $c$ be such that $|\{N(v)\cap S:v\in V\}|\leq c.|S|$ for every non empty $S\subseteq V(G)$. 
    \item By Theorem~\ref{thm:DVO}, there exists $\delta$ such that every $kH$-free graphs with no $K_{m,m}$ subgraph are $\delta$-degenerate, thus have average degree at most $2\delta$.

\end{itemize}

\subsection{Protrusion reduction}\label{sec:prot}

We formally define protrusion reduction in Section~\ref{sec:realprot}.
This subsection is just a gentle introduction to the main ideas since the protrusion literature can be a little bit intimidating and technical. A nice introduction to the topic (in the case of topological minors) can be found in~\cite{basteHittingMinorsProtrusion}, see also~\cite{bodlaender2016metaKernelization} for kernelization aspects.
Fortunately, the main point is based on a very simple idea, which is basically the pigeonhole principle. The difficulty is to express the fact that the number of different holes for the pigeons is bounded. This is generally done using MSOL-formulas up to some bounded depth, but we will try to make this paper both $kH$-free and MSOL-free.

A $p$-protrusion $P=(A,B)$ in a graph $G$ is such that $|A|\leq p$, $N(B)\subseteq A$ and $G[B]$ is \say{simple}, which generally means bounded tree-width. The tree-width requirement will be satisfied in our case, but irrelevant for our proofs, so let us simply forget it. When we say that $P$ contains a minor, we generally mean that $B$ contains it.
For instance, isolated vertices are 0-protrusions, degree 1 vertices are 1-protrusions, and a path $P$ consisting of degree 2 internal vertices (called \emph{proper path}) is a 2-protrusion. All these objects are generally treated (both in graph theory and algorithms) in the same way: we delete or contract them to find an equivalent instance of the problem. 
Finding a reduction becomes much less clear when a very large set $B$ is isolated by, say, 10 vertices. Fortunately, in our minimum counterexample strategy, we just focus on the existence of a reduction, so we can drop the (tedious) question of \emph{how} to find them efficiently. Let us focus on why large protrusions can be reduced.

The key-question is:  does the underlying problem depend (or not) on a bounded size set of properties of $G[A\cup B]$? If this is the case, then the total number of different types of $G[A\cup B]$ (with respect to these properties) is at most some $N$. In particular, for every type $i$, if we let $m_i$ be the minimum size of a $p$-protrusion of type $i$ and let $L$ be the maximum of $m_i+1$, where $i=1,\dots ,N$, then every $p$-protrusion of size at least $L$ can be reduced to a smaller equivalent one.

%In the particular context of $H$-hitting set in $kH$-free graphs, note that if a $p$-cutset separates $G$ into two large parts $B,B'$, then one of them, say $B$, is $H$-minor free, otherwise we can reduce to the $(k-1)H$-free case. The key-observation is that no minimum hitting set intersects $B$ on more than $p$ vertices, since $N[B]$ would be a better choice. Therefore, if such a large protrusion $B$ exists, we would be able to reduce $G$ if we could encode all relevant properties of $B$ in a bounded way. Namely, we want to reduce $B$ so that the new graph is still $kH$-free and has the same size minimum hitting set. 

Robertson and Seymour introduced in~\cite{ROBERTSON199565} the concept of folio in the context of minors, which we generalize here for induced minors.
Let us call $f$-folio of $P$ all the induced minors of size at most $f$ one can find in $G[A\cup B]$, with the additional information of the respective intersections of the connected parts of the minor models with $A$.
The main point to check (and a good warm-up) is that if we replace a $p$-protrusion $B$ by a $p$-protrusion with the same $(p+f)$-folio to form a new graph $G'$, then $G$ and $G'$ have the same induced minors on at most $f$ vertices. 

Assuming for the sake of exposition that $H$ is connected, recall (\Cref{ssec:overview-EPP}) that a $H$-minor free $p$-protrusion $B$ cannot intersect a minimum $H$-hitting set on more than $p$ elements.
We now define the $(s,f)$-folio set as the set of all $f$-folios 
one can obtain in $G[(A\cup B)\setminus X]$ by deleting a set $X$ of size at most $s$ with the additional information $A\cap X$. Observe that the parameter $\tau_H$ is invariant when we replace an $H$-minor free $p$-protrusion by one with the same $(p,h+p)$-folio sets.

To get the same invariance for the parameter $\gamma_H$, we now need to encode all $f$-folios one can obtain by deleting the neighborhood of a set $X$ of size at most $s$. We need then to specify all possible intersections $X\cap A$ and $N(X)\cap A$. This is still a bounded size information which we call $(s,f)$-neighborhood folio (the exact definition of neighborhood folio is slightly more technical, see \Cref{sec:realprot}). 
Again, the parameter $\gamma_H$ is invariant when replacing an $H$-minor free $p$-protrusion by one with the same neighborhood folio (with suitable parameters).  

Therefore, bounding $\gamma_H$ for all $kH$-free graphs $G$ reduces to show that there exists a large $p$-protrusion. 
%Indeed, if a $p$-vertex cut separates $G$ into two arbitrarily large parts $B,B'$, one of these (say $B$) is $H$-minor free (otherwise we apply induction). Thus $B$ is a large $H$-minor free $p$-protrusion. Since the number of distinct neighborhood folios (with some fixed suitable parameters) is bounded, if $B$ is larger than some bound $L$ (only depending on $p,k$ and $H$) then it can be reduced. In particular, $\gamma_H$ cannot be arbitrarily large.
We now turn to the central tool of this paper, how to find a linear number of large protrusions. 

\subsection{Linear Protrusion Property}\label{sec:lin}

We illustrate how our method works in the cycle case, and discuss afterward its generalization to any planar $H$. The goal is to overview the proof of Theorem~\ref{thm:LPPkH}, which asserts that for every $L$, there is a bound $g_L$  such that whenever a $kH$-free graph has $H$-girth more than $g_L$, it has a linear number of independent $p$-protrusions of size at least $L$.

Assume now that $G$ is $kK_3$-free with minimum degree 2 (vertices with degree at most 1 can be easily treated separately). Our goal is to show that if $G$ has (high) girth $g$, then it contains a linear number of long proper paths.

 The trick is to pick some very large $d$ and partition $V(G)$ into the vertices $Y$ of degree at most $d$ and the vertices $X$ of degree more than $d$. Since $G$ is degenerate (large girth), the ratio $|Y|/|X|$ is arbitrarily close to 1. Thus, the graph $G[Y]$ is $kK_3$-free with degree at most $d$ so by Theorem~\ref{thm:KOR}, it satisfies $\tau_{K_3}(G[Y])\leq C$ for some constant $C$. Hence there is a subset $Z$ of size at most $C$ inside $Y$ such that $G[Y\setminus Z]$ is $K_3$-minor free, thus is a forest. Observe that if the number of vertices of $G$ is much larger than $C$, we still have $|Y\setminus Z|/|X\cup Z|$ arbitrarily close to 1. Rename now $X\cup Z$ as $X$ and $Y\setminus Z$ as $Y$.
Note that every leaf in $G[Y]$ has a neighbor in $X$ since the minimum degree is 2.

Unfortunately, due to parameter tuning, the full proof for cycles is not much simpler than the general $kH$-free case. However, since $G[Y]$ is a forest, the argument is a little bit simpler to present. We discuss three possible examples for $G[Y]$ illustrating the main parts of the proof of Theorem~\ref{thm:LPPkH}. These three cases correspond to a trichotomy lemma for trees. Either a tree $T$ on $N$ nodes has $o(N)$ leaves and then almost all vertices have degree 2, thus there are linearly many long proper paths (our first example).
Or $T$ has $\Omega(N)$ leaves in which case either it has linearly many vertex-disjoint leaf to leaf paths (our second example), or there is a $o(N)$ subset of nodes of $T$ whose deletion leaves connected components containing at most one leaf of $T$ (our last example).

\begin{enumerate}

\item $G[Y]$ has linearly many (much more than $|X|$) long proper paths $(P_i)$. Half of them have bounded size (since they are linearly many). If half of these latter have at least two neighbors on $X$, we obtain many (still much more than $|X|$) short $(X,X)$-paths $(P'_i)$. Contracting these paths does not create $C_4$ since the girth is very high, so the linear neighborhood complexity property still holds. But then two of these $P'_i$'s must share endvertices on $X$, contradicting the high girth hypothesis. So a subfamily $(P''_i)$ consisting of a quarter of the family $(P_i)$ has at most one neighbor on $X$. Since   at most one edge connects $P''_i$ to $X$ due to the girth hypothesis, each $P''_i$ path contains a proper path of $G$ of size at least $|P''_i|/2$.

    \item $G[Y]$ has many (much more than $|X|$ and $n/g$) disjoint connected subsets $(S_i)$ each with at least two leaves. This would happen for instance in a binary tree. Then half of these sets have size much less than $g$. Hence contracting all $S_i$ gives a $C_4$-free graph $G'$ which is still $kK_3$-free. The contradiction appears since $G'$ has linear neighborhood complexity and there are too many $S_i$, each with degree at least 2 on $X$.
    
    \item $G[Y]$ has few internal vertices and many leaves. This happens for instance in a caterpillar of length $\sqrt{n}$ with internal vertices of degree $\sqrt{n}$. To solve this case, we have to change our point of view and consider the set $S$ of leaves of $G[Y]$ versus the rest of the graph. Since each leaf has another neighbor in $X$, and the size of $S$ is almost $n$, we contradict the linear neighborhood complexity on $V\setminus S$, or $C_4$-freeness.

\end{enumerate}

The full proof of Theorem~\ref{thm:LPPkH} follows the lines of the cycle case. The first step is to show that almost all of $G$ consists in an induced subgraph $G[Y]$ with tree-width at most $t$. Then Theorem~\ref{thm:SLPPTW} gives a linear set of large independent $2t$-protrusions in $G[Y]$ (this is the previous trichotomy result applied to the decomposition tree). Finally, as for the path case discussed for cycles, a fraction of these protrusions provides a linear set of large $p$-protrusions of $G$. 

To sum-up, a trichotomy argument shows that graphs with  tree-width $t$ have a linear number of large independent $2t$-protrusions. Then, since a $kH$-free graph $G$ with large $H$-girth almost entirely consists in a graph with tree-width at most $t$, one can lift the linear protrusion property from $T$ to $G$.

\section{Protrusions}\label{sec:realprot}

A \emph{$p$-boundaried graph} $P=(A,B)$ is a graph on vertex set 
$A\cup B$, where $A\cap B=\emptyset$, and an enumeration $v_1,\dots,v_p$ of the vertices of $A$. We call $A$ the \emph{boundary} of $P$ and $|B|$ the \emph{size} of $P$. 
We say that two $p$-boundaried graphs $P=(A,B)$ and $P'=(A',B')$, where $A=\{v_1,\dots,v_p\}$ and $A'=\{v'_1,\dots,v'_p\}$, are \emph{compatible} if the function $\phi(v_i)=v'_i$ for all $i=1,\dots,p$ is an isomorphism from $P[A]$ to $P'[A']$.

The \emph{signature} of a subset $X$ of vertices of $P$ is $\sigma(X):=\{i\in [p]:v_i\in X\}$, that is the set of indices of the intersection $X\cap A$. Given some graph $F$ on the set of vertices $[f]:=\{1,\dots ,f\}$ and an induced minor model $\mathcal{M}=\{X_1,\dots ,X_f\}$ of $F$ in $P=(A,B)$, we call \emph{signature} of $\mathcal{M}$ the $f$-tuple of disjoint sets $\sigma(\mathcal{M}):=(\sigma(X_1),\dots ,\sigma(X_f))$.
We say that $(F,\sigma(\mathcal{M}))$  is a \emph{boundaried induced minor} of $P$ and its size is $f$. Since a given graph $F$ can appear in at most $(|A|+1)^{f}$ distinct boundaried induced minors of $P$, the set of all boundaried induced minors of $P$ with size at most $f$, called the \emph{$f$-folio} of $P$, is therefore bounded. To illustrate the notion, note that two $p$-boundaried graphs with the same $p$-folios are compatible, as one can consider the induced minor consisting of the $p$ singleton vertices of their respective boundaries.

A \emph{$p$-protrusion} $P=(A,B)$ in a graph $G$ is a boundaried graph induced by two disjoint sets of vertices: the \emph{boundary} $A$, such that $|A|\leq p$, and the \emph{interior} $B$, such that $N_G(B)\subseteq A$. We will often assume that the boundary is of size exactly $p$. 
Let $A=\{v_1,\dots ,v_p\}$ be the boundary of a $p$-protrusion $P=(A,B)$ of $G$. Given another boundaried graph $P'=(A',B')$ compatible with $P$ (where $A'=\{v'_1,\dots ,v'_p\}$), one can \emph{replace} $P$ by $P'$ in $G$ to form a new graph $G'$ as follows: delete $B$ from $G$, add a disjoint independent copy of $P'$, and identify $A$ with $A'$ by contracting all pairs $\{v_i,v'_i\}$. We then say that $G'$ is obtained by \emph{protrusion replacement} from $G$, or \emph{protrusion reduction} when the size of $B'$ is less than the size of $B$. For simplicity, when $P$ is replaced by $P'$, we will often assume that $A'$ is equal to $A$.

\begin{lemma}\label{lem:kHreduction}
    Let $G$ be a graph with a $p$-protrusion $P=(A,B)$ and $G'$ be obtained by replacing $P$ with $P'=(A,B')$ with the same $(f+p)$-folio. Then $G$ and $G'$ have the same induced minors of size at most $f$.
\end{lemma}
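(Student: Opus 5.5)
By symmetry of the hypotheses (replacing $P'$ by $P$ in $G'$ gives back $G$), it suffices to show that every induced minor $F$ of $G$ with at most $f$ vertices is an induced minor of $G'$. Fix an induced minor model $\mathcal{M}=\{X_1,\dots,X_{f'}\}$ of $F$ in $G$, with $f'\le f$. The plan is to split $\mathcal{M}$ into the part living in $G[A\cup B]$ and the part living in $G-B$. The first part is encoded as a boundaried induced minor of $P$, transferred to $P'$ via the common folio, and glued back to the second part.

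\textbf{Restricting to the protrusion.} Each branch set $X_i$ is connected. The components of $G[X_i\cap(A\cup B)]$ that meet $B$ may be several, but since $N_G(B)\subseteq A$, every component of $G[X_i]$ restricted to $A\cup B$ either contains a vertex of $A$ or equals $X_i\subseteq B$. To obtain a boundaried induced minor, first refine the model. The branch sets inside $A\cup B$ are taken to be the connected components $C$ of $G[X_i\cap(A\cup B)]$ that meet $B$, together with every vertex of $X_i\cap A$ as a singleton (or as part of such a component). The edges between these pieces in $G[A\cup B]$ define a graph $F_P$.

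To keep the number of pieces at most $f+p$, we must bound the components of $G[X_i\cap(A\cup B)]$ that meet $B$. A component contained in $B$ that is not all of $X_i$ is impossible, because its neighbourhood in $X_i$ would have to pass through $A$, and those vertices are not in the component. So every such component contains a vertex of $A$, unless it is the whole $X_i$. Hence the pieces are at most $p$ pieces meeting $A$ plus at most $f$ branch sets entirely inside $B$, for a total of at most $f+p$. This gives a boundaried induced minor $(F_P,\sigma)$ of $P$ of size at most $f+p$, where $\sigma$ records exactly which boundary vertices lie in each piece. Pieces meeting $A$ are recorded by boundary indices, so we also tag them with the index $i$ they came from. (Pieces in $A$ that do not touch $B$ can simply be singletons of $A$.)

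\textbf{Transfer and gluing.} Since $P'$ has the same $(f+p)$-folio, there is an induced minor model $\mathcal{M}'$ of $F_P$ in $G'[A\cup B']$ with the same signature. For each $i$, define $X'_i$ as the union of $X_i\setminus(A\cup B)$ with the pieces of $\mathcal{M}'$ corresponding to the pieces of $X_i$.

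We then check three things.
\begin{enumerate}[label=(\roman*)]
\item \emph{Connectivity.} Connectivity of $X_i$ in $G$ is witnessed by paths alternating between $G-B$ and pieces. Each piece meets $A$ in the same boundary vertices in $G'$, and adjacencies among pieces are preserved because $\mathcal{M}'$ realises the same graph $F_P$. So contracting pieces gives the same connectivity pattern, and $X'_i$ is connected.
\item \emph{Edges of $F$.} An edge $X_iX_j$ of $F$ is realised either outside $B$, and then it survives verbatim, since $G$ and $G'$ agree on $G-B$ and $A$ is common and $G[A]=G'[A]$ by compatibility. Otherwise it is realised between pieces, and it is preserved by $F_P$.
\item \emph{Non-edges (inducedness).} If $X'_i$ and $X'_j$ were adjacent in $G'$ but $X_i,X_j$ are not adjacent in $G$, the witnessing edge lies either in $G'-B'$, where it is also an edge of $G$, or inside $A\cup B'$ between two pieces. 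The latter is excluded because $\mathcal{M}'$ is an \emph{induced} model of $F_P$ and $F_P$ has no such edge.
\end{enumerate}

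\textbf{Main obstacle.} The delicate step is the accounting in the restriction. We must make sure that the pieces, with signatures, are disjoint sets carrying exactly the boundary information needed to reconstruct connectivity and adjacency, and that their number stays within $f+p$. A subtle point is that signatures identify boundary vertices, but two pieces of the same $X_i$ sharing no boundary vertex could still need to be joined through $G-B$. This is handled because every piece meeting $B$ but not equal to $X_i$ contains a boundary vertex, and these boundary vertices are identical in $G'$.
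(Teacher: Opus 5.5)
Your proof is correct and takes essentially the same route as the paper. You split each branch set into its part outside $A\cup B$ and its pieces inside $A\cup B$, bound the number of pieces by $f+p$ using $N(B)\subseteq A$, transfer the resulting boundaried induced minor to $P'$ via the common $(f+p)$-folio, and glue back. The only cosmetic difference is that you break components of $G[X_i\cap(A\cup B)]$ lying entirely inside $A$ into singletons, whereas the paper keeps all components whole. The piece count, connectivity check and inducedness check go through identically, and you spell the last two out in more detail than the paper does.
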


\begin{proof}
We just have to show that if $F$ on vertex set $[f]$ is an induced minor of $G$, then it is an induced minor of $G'$. We consider a minor model $\{Y_1,\dots ,Y_f\}$ of $F$ in $G$ and denote by $X_i$ the intersection of $Y_i$ and $V\setminus (A\cup B)$ for all $i=1,\dots ,f$. We denote by $\mathcal{Z}_i=\{Z_i^1,\dots ,Z_i^{k_i}\}$ the connected components of $G[Y_i\cap (A\cup B)]$. If $Y_i\cap A=\emptyset$, then since $G[Y_i]$ is connected, we have $k_i=0$ ($Y_i$ disjoint from $P$) or $k_i=1$ ($Y_i$ inside $B$). If $Y_i\cap A\neq\emptyset$, note that $k_i\leq |Y_i\cap A|$. In particular we always have $k_i\leq |Y_i\cap A|+1$, and therefore $k_1+\dots +k_f\leq f+p$. Since $P$ and $P'$ have the same $(f+p)$-folios, there exist disjoint corresponding connected subsets $W^{j}_i$ in $G'[A\cup B']$ such that $W_i^j\cap A=Z_i^j\cap A$ for all $j\in [k_i]$. Moreover there is an edge between $W_i^j$ and $W_a^b$ if and only if an edge exists between $Z_i^j$ and $Z_a^b$, for all $j\in [k_i]$ and $b\in [k_a]$.

We now set $Y'_i:=X_i\cup W_i^1\cup \dots \cup W_i^{k_i}$ for all $i=1,\dots ,f$. Since $G[Y_i]$ is connected, $G'[Y'_i]$ is also connected. Also, $G[Y_i\cup Y_j]$ and $G'[Y'_i\cup Y'_j]$ are both connected or both not connected, for all $i,j\in [f]$. In particular, $\{Y'_1,\dots ,Y'_f\}$ forms an induced $F$-minor model in $G'$.
\end{proof}

\begin{corollary}\label{cor:kHreduction}
    Let $G$ be a $kH$-free graph with a $p$-protrusion $P=(A,B)$ and $G'$ be obtained by replacing $P$ with $P'=(A,B')$ with the same $(kh+p)$-folio. Then $G'$ is $kH$-free.
\end{corollary}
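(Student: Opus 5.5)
We argue by contraposition, reducing to \Cref{lem:kHreduction} with $f:=kh$. The key observation is that $kH$-freeness is a property of the induced minors of size at most $kh$. Concretely, $G$ fails to be $kH$-free exactly when it contains $k$ disjoint, pairwise non-adjacent $H$-minor models $\mathcal{H}_1,\dots,\mathcal{H}_k$. I will encode this as the statement that $G$ contains, as an induced minor, some graph $F$ on at most $kh$ vertices taken from a finite family $\mathcal{F}_k$.

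First I fix $\mathcal{F}_k$. It consists of all graphs $F$ on vertex set $[kh]$ that are partitioned into $k$ blocks of $h$ vertices, with no edges between different blocks, and such that each block contains $H$ as a spanning subgraph.

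Next I check the two directions of the encoding. Suppose $G$ has $k$ independent $H$-minor models $\mathcal{H}_1,\dots,\mathcal{H}_k$. Together they give $kh$ disjoint connected sets. Contracting them yields an induced minor model of some graph $F$, namely the quotient $G[\bigcup_i\cup\mathcal{H}_i]/\bigcup_i\mathcal{H}_i$. Independence of the models means there are no edges between different blocks. Each block contains $H$ spanningly, because each $\mathcal{H}_i$ is an $H$-minor model in the paper's sense, where the quotient contains $H$ as a spanning subgraph. Hence $F\in\mathcal{F}_k$.

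Conversely, suppose $\{Y_1,\dots,Y_{kh}\}$ is an induced model of some $F\in\mathcal{F}_k$. Grouping the $Y_j$ according to the blocks of $F$ gives $k$ $H$-minor models. Two of these models are adjacent only if some $Y_j,Y_{j'}$ lying in different blocks are adjacent. That would produce an edge of $F$ between different blocks, which is impossible because the model is induced. So the grouped models are pairwise independent.

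Finally I conclude. If $G'$ is not $kH$-free, then $G'$ contains some $F\in\mathcal{F}_k$ as an induced minor of size $kh$. Applying \Cref{lem:kHreduction} with $f=kh$ is legitimate, since $P$ and $P'$ share the same $(kh+p)$-folio. The lemma is symmetric in $G$ and $G'$: the equal-folio hypothesis is symmetric, and $G$ is obtained from $G'$ by replacing $P'$ back with $P$. Therefore $F$ is also an induced minor of $G$, so $G$ contains $k$ independent $H$-minor models. This contradicts the hypothesis that $G$ is $kH$-free.

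The main point requiring care is this symmetry of \Cref{lem:kHreduction}. Its proof transfers induced minors from $G$ to $G'$ using only the equality of folios, so the roles of $G$ and $G'$ can be swapped. One must also check that $P'$ is a protrusion in $G'$, which holds since in $G'$ the set $B'$ is attached only to $A$.
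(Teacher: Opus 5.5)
Your proof is correct and is the same argument as the paper's: you express $kH$-freeness as excluding the finitely many induced minors on $kh$ vertices that are independent unions of $k$ spanning supergraphs of $H$, then apply \Cref{lem:kHreduction} with $f=kh$ to transfer any such induced minor from $G'$ back to $G$. Your extra care about symmetry is harmless but not needed, since the lemma as stated already asserts that $G$ and $G'$ have the same induced minors of size at most $f$.
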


\begin{proof}
The property of being $H$-minor free can be expressed as being $H'$-induced minor free for all possible supergraphs $H'$ of $H$ on $h$ vertices. Similarly, the property of being $kH$-free can be expressed as a bounded number of forbidden induced minors on $kh$ vertices (all independent unions of $k$ supergraphs of $H$).
\end{proof}

We now extend Corollary~\ref{cor:kHreduction} in order to perform protrusion replacements which not only preserve $kH$-freeness but also the size of a minimum $H$-hitting set in $G$. The first difficulty here is to encode the fact that one can remove some subset $X$ of at most $s$ vertices. This can be done by considering all possible folios one can obtain by removing such an $X$ (subject to all possible intersections $A\cap X$ with the boundary $A$). The second difficulty is that a $p$-protrusion $P$ can intersect a minimum $H$-hitting set on many vertices. The key-remark is that if $P$ does not contain $H$ as a minor (by this we mean that $P[B]$ is $H$-minor free), then a minimum $H$-hitting set cannot have more than $p$ vertices in $P$, since just picking the boundary would be a better choice. Note that we use the connectivity of $H$ for this.

Given a subset of vertices $X$ of some boundaried graph $P=(A,B)$, where $A$ is $\{v_1,\dots ,v_p\}$, we denote by $P\setminus X=(A\setminus X,B\setminus X)$ the boundaried graph in which $A\setminus X=\{v_1,\dots ,v_p\}\setminus X$ is enumerated in the same order as in $A$. 
The \emph{$(s,f)$-folio set} of $P=(A,B)$ is the set of all possible triples $(r,\sigma(Y),\mathcal{F})$ where $r\leq s$, $Y$ is a subset of vertices of size $r$ in $A\cup B$ and $\mathcal{F}$ is the $f$-folio of $P\setminus Y$.

\begin{lemma} \label{lem:hittingred}
    Let $H$ be a connected graph, and $G$ be a $kH$-free graph with a $p$-protrusion $P=(A,B)$ such that $G[B]$ is $H$-minor free. Let $G'$ be obtained by replacing $P$ by $P'=(A,B')$ with the same $(kh+p)$-folio and the same $(p,h+p)$-folio set. Then $G'$ is $kH$-free and $\tau_H(G)=\tau_H(G')$.
\end{lemma}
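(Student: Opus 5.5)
The $kH$-freeness of $G'$ is immediate from Corollary~\ref{cor:kHreduction}, since $P$ and $P'$ have the same $(kh+p)$-folio. The substance is the equality $\tau_H(G)=\tau_H(G')$. By symmetry it suffices to show $\tau_H(G')\le\tau_H(G)$, but the situation is not quite symmetric: we only know that $G[B]$ is $H$-minor free, not $G'[B']$. So I would first check that $G'[B']$ is $H$-minor free. The $(p,h+p)$-folio set contains the triple for $Y=A$, whose $(h+p)$-folio records the induced minors of $P\setminus A=G[B]$ of size at most $h$. Hence $G[B]$ and $G'[B']$ have the same induced minors on at most $h$ vertices. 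As in the proof of Corollary~\ref{cor:kHreduction}, $H$-minor freeness is equivalent to excluding all supergraphs of $H$ on $h$ vertices as induced minors. Thus $G'[B']$ is $H$-minor free, and the argument below applies in both directions.

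Let $X$ be a minimum $H$-hitting set of $G$. The key normalisation is that we may assume $|X\cap (A\cup B)|\le p$. If $|X\cap B|>|A\setminus X|$, replace $X$ by $X''=(X\setminus B)\cup A$, which is no larger. I claim $X''$ is still a hitting set. Since $H$ is connected, any $H$-minor model in $G\setminus X''$ is a connected set avoiding $A$. It therefore lies entirely in $B$, which is impossible because $G[B]$ is $H$-minor free, or entirely outside $A\cup B$, in which case it avoids $X''\supseteq X\setminus(A\cup B)$ and hence also $X$. So we may assume $|X\cap B|\le|A\setminus X|$, and in particular $Y:=X\cap(A\cup B)$ has size $r\le p$.

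Now use the folio set. The triple $(r,\sigma(Y),\mathcal F)$, with $\mathcal F$ the $(h+p)$-folio of $P\setminus Y$, also appears in the folio set of $P'$. So there is $Y'\subseteq A\cup B'$ with $|Y'|=r$, $Y'\cap A=Y\cap A$, and $P'\setminus Y'$ having the same $(h+p)$-folio as $P\setminus Y$. Set $X':=(X\setminus (A\cup B))\cup Y'$, so that $|X'|=|X|$. Then $G'\setminus X'$ is obtained from $G\setminus X$ by replacing the $(p-|A\cap X|)$-protrusion $P\setminus Y$ with $P'\setminus Y'$. These are compatible boundaried graphs with equal $(h+p)$-folios, and the boundary is the same set $A\setminus X$ with the same enumeration. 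Lemma~\ref{lem:kHreduction}, applied with $f=h$ and boundary size at most $p$ (a smaller boundary only helps), shows that $G\setminus X$ and $G'\setminus X'$ have the same induced minors on at most $h$ vertices. Hence $G'\setminus X'$ is $H$-minor free, giving $\tau_H(G')\le\tau_H(G)$; the reverse inequality follows by the same argument with the roles swapped.

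The main obstacle is bookkeeping rather than ideas. First, the normalisation step must use both the connectivity of $H$ and the $H$-minor freeness of the interior, which is why I verify the latter for $B'$ first. Second, one must check that the signature $\sigma(Y)$ fixes the reduced boundary $A\setminus X$ together with its induced enumeration, so that Lemma~\ref{lem:kHreduction} genuinely applies to the deleted protrusions.
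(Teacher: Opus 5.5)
Your proof is correct and follows the paper's argument. You normalise a minimum hitting set to meet $A\cup B$ in at most $p$ vertices, using the connectivity of $H$ and the $H$-minor-freeness of the interior. You then transfer it through the $(p,h+p)$-folio set and conclude with the folio-replacement lemma (the paper invokes Corollary~\ref{cor:kHreduction} with $k=1$, which is the same thing). The differences are cosmetic: you argue from $G$ to $G'$ rather than from $G'$ to $G$, and you certify that $G'[B']$ is $H$-minor free via the folio-set entry for $Y=A$, whereas the paper reads this off the equality of $h$-folios.
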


\begin{proof} By Corollary~\ref{cor:kHreduction}, the graph $G'$ is also $kH$-free. We only show $\tau_H(G)\leq \tau_H(G')$, the reverse direction follows by swapping the roles of $G$ and $G'$.  Consider a minimum $H$-hitting set $X'$ of $G'$. Since $P$ and $P'$ have the same $h$-folio, $G'[B']$ is also $H$-minor free. Since $H$ is connected, then $X'$  satisfies $|X'\cap (A\cup B')|\leq p$, otherwise $(X'\setminus B')\cup A$ would be a smaller hitting set. Therefore $Y'=X'\cap (A\cup B')$ has size at most $p$. 

Since $P$ and $P'$ have the same $(p,h+p)$-folio set (applied to $Y'$), there exists $Y\subseteq A\cup B$ such that $Y\cap A=Y'\cap A$ and $|Y|=|Y'|$, and such that the boundaried graphs $Q=P\setminus Y$ and $Q'=P'\setminus Y'$ have the same $(h+p)$-folio. We now set $X:=(X'\setminus Y')\cup Y$.
Note that $G\setminus X$ is obtained by replacing the protrusion $Q'$ in 
$G'\setminus X'$ by $Q$.
Since $G'\setminus X'$ is $H$-minor free, by Corollary~\ref{cor:kHreduction} (applied with $k=1$), the graph $G\setminus X$ is also $H$-minor free. We have $\tau_H(G)\leq |X|=|X'|=\tau _H(G')$.
\end{proof}

We need to generalize Lemma~\ref{lem:hittingred} to the parameter $\gamma_H$. We define for this the \emph{$(s,f)$-neighborhood folio} of a boundaried graph $P=(A,B)$ to be the set of all $f$-folios one can obtain by deleting both a set of vertices and the closed neighborhood of a set of vertices of size at most $s$ with a prescribed intersection and neighborhood intersection with $A$. Formally, it is the set of all possible 6-tuples $(a,b,\sigma(X),\sigma(Y),\sigma(N[Y]),\mathcal{F})$ where $a\leq s$, $b\leq s$, $X$ is a subset of vertices of size $a$ in $A\cup B$, $Y$ is a subset of vertices of size $b$ in $A\cup B$ and $\mathcal{F}$ is the $f$-folio of $P\setminus (X\cup N[Y])$. 

\begin{lemma} \label{lem:Nhittingred}
Let $H$ be a connected graph, and $G$ be a $kH$-free graph with a $p$-protrusion $P=(A,B)$ such that $G[B]$ is $H$-minor free, and $G'$ be obtained by replacing $P$ by $P'=(A,B')$ with the same $(kh+p)$-folio and the same $(p,h+p)$-neighborhood folio. Then $G'$ is $kH$-free and $\gamma_H(G)=\gamma_H(G')$.
\end{lemma}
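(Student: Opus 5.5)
The plan follows the proof of Lemma~\ref{lem:hittingred}, with the closed neighborhood $N[\cdot]$ replacing plain deletion. First, $kH$-freeness of $G'$ is Corollary~\ref{cor:kHreduction}, since $P$ and $P'$ share the $(kh+p)$-folio. By symmetry it suffices to show $\gamma_H(G)\leq\gamma_H(G')$.

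Take a minimum $H$-dominating set $Z'$ of $G'$. The first step is to bound its intersection with the protrusion. Since the folios agree, $G'[B']$ is $H$-minor free. Because $H$ is connected, every $H$-minor model of $G'$ that meets $B'$ either lies inside $B'$, which is impossible, or meets $A$. Hence $(Z'\setminus B')\cup A$ dominates every $H$-minor model that $Z'$ did: the models avoiding $B'$ are hit as before, and the others contain a vertex of $A\subseteq N[A]$. Minimality then gives $|Z'\cap B'|\le |A|\le p$, and so $Y':=Z'\cap(A\cup B')$ has size at most $2p$.

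This is where the main obstacle lies. The bound needed is $|Y'|\le p$, not $2p$, and the deletion must be split into two kinds of set. The right encoding is $X'$, the vertices of $A$ already lying in $N[Z'\setminus B']$ (these are deleted from outside), together with $Y'':=Z'\cap B'$, for which $|Y''|\le p$. To handle $Z'\cap A$ I would try the following. Vertices of $Z'\cap A$ dominate vertices in both the outside and $B'$. Their contribution to the outside is recorded by $N_G$ of the outside part, and their contribution inside the protrusion is captured by putting them into the ``$Y$'' coordinate. With this, the $Y$-coordinate is $Z'\cap(A\cup B')$. If this exceeds $p$, I would refine the swap: replace $Z'\cap B'$ by $A\setminus Z'$, which gives $|Z'\cap(A\cup B')|\le p$ directly, since otherwise $(Z'\setminus B')\cup A$ is strictly smaller. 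The $X$-coordinate is $N[Z'\setminus(A\cup B')]\cap A$, of size at most $p$. The tuple $(|X'|,|Y'|,\sigma(X'),\sigma(Y'),\sigma(N[Y']),\mathcal{F}')$, with $\mathcal{F}'$ the $(h+p)$-folio of $P'\setminus(X'\cup N[Y'])$, lies in the $(p,h+p)$-neighborhood folio. So there exist $X,Y\subseteq A\cup B$ with the same sizes and signatures (in particular $N[Y]\cap A=N[Y']\cap A$) and the same $(h+p)$-folio after deletion.

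Now set $Z:=(Z'\setminus Y')\cup Y$, so $|Z|=|Z'|$. The signatures of $X$ and $X'$ agree, so the added vertex $X$ adds nothing beyond what the outside part already removes. One then checks that $G-N[Z]$ equals the graph obtained from $G'-N[Z']$ by replacing the boundaried graph $P'\setminus(X'\cup N[Y'])$ with $P\setminus(X\cup N[Y])$. Outside vertices are removed identically, because $N(Y)\cap(V\setminus(A\cup B))=N(Y\cap A)$ and $Y\cap A=Y'\cap A$. Boundary vertices are removed identically by the signature equalities. Interior vertices are removed exactly by $N[Y]\cup X$, since the outside reaches $B$ only via $A$, and the outside part of $Z$ lies in neither $A$ nor $B$. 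These two boundaried graphs have the same $(h+p)$-folio, and $G'-N[Z']$ is $H$-minor free. So Corollary~\ref{cor:kHreduction} with $k=1$ shows that $G-N[Z]$ is $H$-minor free, which gives $\gamma_H(G)\le|Z|=\gamma_H(G')$.
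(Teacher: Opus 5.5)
Your proposal is correct and follows the paper's proof essentially step for step: the swap with $A$ gives $|Z'\cap(A\cup B')|\le p$ directly (so the intermediate $2p$ detour is unnecessary), you take $X'=N(Z'\setminus(A\cup B'))\cap A$ and $Y'=Z'\cap(A\cup B')$, invoke the $(p,h+p)$-neighborhood folio, set $Z=(Z'\setminus Y')\cup Y$, and conclude with Corollary~\ref{cor:kHreduction} for $k=1$. Your vertex-by-vertex check that $G-N[Z]$ is obtained from $G'-N[Z']$ by replacing $P'\setminus(X'\cup N[Y'])$ with $P\setminus(X\cup N[Y])$ is more explicit than the paper's one-line assertion; you should also state outright that $X=X'$, which follows from $X'\subseteq A$ together with $|X|=|X'|$ and $\sigma(X)=\sigma(X')$.
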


\begin{proof} We follow the proof of Lemma~\ref{lem:hittingred} and show $\gamma_H(G)\leq \gamma_H(G')$.
Assume that $D'$ is a minimum $H$-dominating set of $G'$. Since $P$ and $P'$ have the same $h$-folio, $G'[B']$ is also $H$-minor free, and therefore $Y'=D'\cap (A\cup B')$ has size at most $p$. We denote by $X'$ the set $N(D'\setminus (A\cup B'))\cap A$, which is the set of vertices of $A$ dominated by $D'$ from the outside of $P'$. 

Since $P$ and $P'$ have the same $(p,h+p)$-neighborhood folio (applied to the couple of sets $X',Y'$), there exists $Y\subseteq A\cup B$ and $X\subseteq A\cup B$ such that $X\cap A=X'\cap A$, $Y\cap A=Y'\cap A$, $N[Y]\cap A = N[Y']\cap A$, $|Y|=|Y'|$, $|X|=|X'|$, and such that the boundaried graphs $Q=P\setminus (X\cup N[Y])$ and $Q'=P'\setminus (X'\cup N[Y'])$ have the same $(h+p)$-folio. Note that $X=X'$ since $X'\subseteq A$, $X\cap A=X'\cap A$, and $|X|=|X'|$. 

We set $D:=(D'\setminus Y')\cup Y$. Note that $G\setminus N[D]$ is obtained by replacing the protrusion $Q'$ in 
$G'\setminus N[D']$ by $Q$.
Since $G'\setminus N[D']$ is $H$-minor free, by Corollary~\ref{cor:kHreduction} (applied with $k=1$), the graph $G\setminus N[D]$ is also $H$-minor free. We have $\gamma_H(G)\leq |D|=|D'|=\gamma_H(G')$.
\end{proof}

\section{Linear Protrusion Property}\label{sec:LPP}

Recall that a $p$-protrusion $P=(A,B)$ satisfies $|A|\leq p$.
Two protrusions $P=(A,B)$ and $P'=(A',B')$ in a graph $G$ are \emph{independent} if $B$ and $B'$ are disjoint and independent (no edge between them). An \emph{independent} family of $p$-protrusions of $G$ is a set of pairwise independent $p$-protrusions of $G$. 
A class $\mathcal C$ of graphs has the \emph{$(p,L)$-Linear Protrusion Property} (for short $(p,L)$-LPP) if there exists $c_L>0$ such that every graph $G$ in $\mathcal C$ has an independent family of $p$-protrusions $\cal P$ where $|\mathcal{P}|\geq \lfloor c_L n\rfloor$ and every protrusion  of $\cal P$ has size  at least $L$. Moreover, $\mathcal{C}$ has the \emph{$p$-Strong Linear Protrusion Property} (for short $p$-SLPP) if for every $L$, the class $\mathcal{C}$ has the $(p,L)$-LPP. 

\begin{theorem}\label{thm:SLPPTW}
    Graphs with tree-width $t$ have the $2t$-SLPP.
\end{theorem}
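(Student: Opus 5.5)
The plan is to work on a tree decomposition, cut it at few nodes, and group the resulting pieces into large protrusions. Let $G$ have tree-width $t$ and fix $L$. I will read ``tree-width $t$'' so that bags have size at most $t$, which makes a union of two bags a boundary of size at most $2t$; under the other convention the same argument gives $2(t+1)$. Only the constant changes, and I would adjust to the paper's convention. Also, the bound $\lfloor c_L n\rfloor$ is vacuous when $c_Ln<1$, so I may assume $n$ is large compared with $L$ and $t$.

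\textbf{Step 1: an owned-vertex decomposition.} Take a rooted tree decomposition $(T,\{\beta(x)\})$ of $G$ of width $t$ in which no bag is contained in its parent's bag. Such a decomposition exists by contracting redundant edges. Assign each vertex $v$ to the node closest to the root whose bag contains $v$, and let $w(x)\ge 1$ be the number of vertices owned by $x$. Then $w(x)\le t$, $\sum_x w(x)=n$, and $|V(T)|\le n$. For a set $S$ of nodes, write $\mathrm{own}(S)$ for the vertices owned by nodes of $S$.

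\textbf{Step 2: LCA-closed marking.} Set $L'$ to a large multiple of $L t$, to be tuned. Process $T$ bottom-up and mark a node as soon as the total weight of its unmarked descendants, including itself, reaches $L'$. This gives at most $n/L'$ marked nodes. Close the marked set $M$ under lowest common ancestors; this at most doubles it, so $|M|\le 2n/L'$.

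By the standard fact for LCA-closed sets, every component $C$ of $T-M$ is adjacent to at most two nodes of $M$. By the marking rule, every component has weight less than $L'$.

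Let $A_C$ be the union of the bags of the at most two attachment nodes. Then $\mathrm{own}(C)\setminus A_C$ has all its neighbours inside $A_C$, since vertices of $C$'s bags that lie outside $C$'s subtree occur in the attachment bags. So $(A_C,\mathrm{own}(C)\setminus A_C)$ is a $2t$-protrusion. Interiors of distinct components are disjoint and non-adjacent, because they are separated by bags of $M$.

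\textbf{Step 3: grouping.} This is the main obstacle. Components can be tiny, for example single leaves hanging off a high-degree node in a star-like or caterpillar-like $T$. This is exactly the trichotomy from the overview, so I cannot use components individually.

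The fix is to group components by their attachment set. The number of distinct attachment sets is at most $2|M|+1$, since they correspond to nodes and edges of the tree obtained by contracting $T-M$. A union of components with the same attachment set is still a $2t$-protrusion with boundary $A_C$. Within a group, I greedily cut the components into chunks of total interior size at least $L$. Each chunk then has size at most $L+L'$, because each component has weight below $L'$. Chunks from the same group are independent, since different components are non-adjacent.

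\textbf{Step 4: counting.} The vertices lost are of three kinds:
\begin{itemize}
\item vertices owned by $M$, at most $t|M|\le 2tn/L'$;
\item the remainder in groups or chunks of size below $L$, at most $L(2|M|+2)\le 4Ln/L'+2L$;
\item vertices owned by $C$ but lying in $A_C$, which are already counted in the first item since they are owned by $M$-nodes.
\end{itemize}
Choosing $L'=16(t+1)L$ makes the total loss at most $n/2$ once $n$ is large. The remaining at least $n/2$ vertices are covered by chunks of size at most $L+L'$. This gives at least $n/(2(L+L'))$ independent $2t$-protrusions of size at least $L$, so $c_L=1/(2(L+16(t+1)L)+1)$ works.
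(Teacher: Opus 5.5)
Your argument is sound in structure and takes a genuinely different route from the paper. Two local points need repair.

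\emph{Third loss item in Step 4.} Ownership is unique, so a vertex of $\mathrm{own}(C)\cap A_C$ is owned by a node of $C$, never by an $M$-node; your justification is therefore false. Such a vertex cannot lie in the upper attachment bag, because its top node is in $C$. So it lies in $\beta(x_2)$ for the unique lower attachment $x_2$ of $C$. Each node of $M$ is the lower attachment of at most one component, namely the one containing its parent. Hence these losses total at most $t|M|$ and must be added separately. The total loss is then at most $4(t+L)n/L'+2L\le n/4+2L$, so $L'=16(t+1)L$ still works. Relatedly, in Step 2 ``unmarked descendants'' must mean descendants not separated from the node by a marked node. Under the literal reading, the bound $|M|\le n/L'$ already fails on a path.

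\emph{Convention.} The paper uses bags of size at most $t+1$; its first case uses $|N(G_i^*)|\le t+1\le 2t$. With whole attachment bags your boundary then has size up to $2t+2$, so as written you prove the $2(t+1)$-SLPP, which is weaker than the statement. To get $2t$, use adhesions. Take a component $C$ with top node $r$, upper attachment $x_1$, and lower attachment $x_2$ whose parent is $y_2\in C$. Use boundary $S_1\cup S_2$ with $S_1=\beta(x_1)\cap\beta(r)$ and $S_2=\beta(x_2)\cap\beta(y_2)$, and interior $\mathrm{own}(C)\setminus S_2$. Each $S_i$ has size at most $t$ because no bag is contained in its parent's bag. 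Sharpen your separation argument to: a vertex lying in bags both inside and outside $C$ lies in the adhesion of the tree edge leaving $C$. This shows that all neighbours of the interior lie in $S_1\cup S_2$. For a group of components sharing a single attachment node $x$, the boundary $\beta(x)$ has size at most $t+1\le 2t$. Only the constants in the counting change.

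\emph{Comparison.} The paper prunes the decomposition to the subtree $T'$ of nodes whose subtree spans at least $L+t+1$ vertices, and then runs a trichotomy.
If $T'$ has many leaves, the leaf subtrees give the protrusions.
If it has many nodes but few leaves, long proper paths of $T'$ are cut into segments of $2L$ nodes, each with two adhesions as boundary.
If it has few nodes, the many small components outside $V(G')$ are grouped by their neighbourhoods, using the linear neighbourhood complexity of bounded tree-width graphs; a $K_{L,t+1}$-minor argument bounds each group's boundary by $t$.

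Your LCA-closed decomposition, the standard tool behind protrusion decompositions in kernelization, collapses these cases into one. The at-most-two-attachments property plays the role of the proper paths. Grouping by attachment node replaces the neighbourhood-complexity step, since components hanging from one node automatically share a boundary. This gives a self-contained proof with a much better constant: $c_L=\Theta(1/(tL))$ instead of $\Theta(1/(c_t tL^3))$. The paper's version instead mirrors its cycle trichotomy and uses the same neighbourhood-grouping device that reappears in the proof of Theorem~\ref{thm:LPPkH}.
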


\begin{proof}  
For every integer $L$, we want to show the existence of $c_L>0$ such that every graph $G$ with tree-width at most $t$ has $\lfloor c_L.n\rfloor$ pairwise independent $2t$-protrusions of size at least $L$. Note that we just have to show this property for large enough values of $L$, so we can assume $L\geq 2t+2$.

Consider a tree-decomposition $T$ of $G$ which is rooted at some bag $B_0$ (possibly empty when $G$ is disconnected). Given a bag $B_i$, we denote by $T_i$ the subtree of $T$ rooted at $B_i$ and by $G_i$ the subgraph of $G$ induced by the vertices of the union of the bags of $T_i$. We also denote by $G_i^*$ the induced subgraph $G_i\setminus B_i$. We can assume without loss of generality that all the graphs $G_i$ are connected (apart possibly $G_0$) and that all $B_i$ are pairwise incomparable with respect to inclusion. 

We consider the subtree $T'$ of $T$ consisting of the bags $B_i$ such that $|V(G_i)|\geq L+t+1$. We denote by $b$ the number of nodes of $T'$ and by $\ell $ its number of leaves. We denote by $G'$ the graph induced by the union of the vertices of the bags of $T'$. Recall that graphs with tree-width at most $t$ have linear neighborhood complexity (for some constant $c_t\geq 1$). We now set $c_L:=1/40c_t(t+1)L^3$

We discuss the values of $b$ and $\ell $, with three main cases.

\begin{itemize}
    \item Assume $\ell \geq n/30c_t(t+1)L^3$. Consider a leaf $B_i$ of $T'$. Observe that $N(G_i^*)\subseteq B_i$ and $|V(G_i^*)|\geq |V(G_i)|-|B_i|\geq L$. Thus $N(G_i^*)$ has size at most $t+1\leq 2t$ and therefore $P_i=(N(G_i^*),G_i^*)$ is a $2t$-protrusion of size at least $L$. In particular, by selecting a protrusion for each leaf $B_i$, we have found our independent family of 
    size $\lfloor c_L.n\rfloor$.

    \item Assume that $b\geq n/4c_t(t+1)L^2$ and $\ell<n/30c_t(t+1)L^3$. The number $\ell'$ of nodes of $T'$ with no child in $T'$ (leaves) or more than one child in $T'$ is at most $2\ell$. So $\ell'\leq 2\ell<n/15c_t(t+1)L^3$. A path $P$ of $T'$ (seen as a rooted path going away from the root) is \emph{proper} if all its nodes have one child in $T'$ and $P$ is maximal for this property. To a proper path $P$, one can associate the (private) child bag $B_P$ of $T'$ of the last node of $P$. Note that by maximality, $B_P$ does not have a unique child in $T'$, hence the number of proper paths is at most $\ell'$. Each proper path $P$ of $T'$ can be partitioned into $\lfloor|P|/2L\rfloor$ subpaths of length $2L$, and some leftover set of nodes with size less than $2L$. The collection of all these subpaths of length $2L$ form a family $\mathcal{F}=\{P'_1,\dots ,P'_N\}$. A node of $T'$ is \emph{good} if it belongs to $\bigcup \mathcal{F}$. Each of these paths $P'_i$ of $T'$ starts with a bag $B_i$ which has a parent node $B'_i$ and ends with a bag $B_j$ which has a unique child node $B'_j$ in $T'$. Each $P_i'$ can be seen as a protrusion on vertex set $V(G_i)\setminus V(G'_j)\setminus B_j)$ whose boundary $A_i:=(B_i\cap B'_i)\cup (B_j\cap B'_j)$ has size at most $2t$. The set of vertices of $P_i'$ which are not in $A_i$  has size at least $2L-2t\geq L$. We then just have to show that $N$ is large enough.

    To get a lower bound on $N$, note that at most $2L\ell'$ nodes of proper paths are not used in $\mathcal{F}$ (the leftover of the partition modulo $2L$). The number of good nodes is at least 
    $$b-\ell'-2L\ell'\geq b-3L\ell'\geq n/4c_t(t+1)L^2- n/5c_t(t+1)L^2\geq n/20c_t(t+1)L^2.$$
    
    In particular, $N>n/40c_t(t+1)L^3$ since the good nodes are grouped into subsets of size $2L$. We have found our independent family of 
    size $\lfloor c_L.n\rfloor$.
    
\item Assume now that $b<n/4c_t(t+1)L^2$, in particular $G'$ has at most $n/4c_tL^2$ vertices. We denote by $\mathcal{B}$ the set of connected components of $G\setminus V(G')$. By definition of $T'$, the size of each component in $\mathcal{B}$ is at most $L+t+1$. So the size of $\mathcal{B}$ is at least $(n-n/4c_tL^2)/(L+t+1)>n/2L$ since $L+t+1\leq 3L/2$ and $1-1/4c_tL^2>3/4$. 

Consider now the graph $G/\mathcal{B}$ in which every connected component in $\mathcal{B}$ is contracted to a single vertex. Note that $\bigcup \mathcal{B}/\mathcal{B}$ is an independent set of $G/\mathcal{B}$ denoted by $B$. The neighborhood $A$ of $B$ in $G/\mathcal{B}$  has size at most $n/4c_tL^2$ since it is contained in $V(G')$. Partition the elements of $B$ into $C_1,\dots ,C_r$ according to their neighborhoods in $A$.  By the linear neighborhood complexity property of $G/\mathcal{B}$ (which has tree-width at most $t$), we have $r\leq n/4L^2$. Refine further the partition $(C_i)$ into a partition $(D_i)$ in which each $C_i$ is partitioned into some subsets of size $L$ and a subset of size $|C_i|\mod L$. Denote by $\mathcal{D}$ the subcollection of subsets $D_i$ with size exactly $L$, and set $d:=|\mathcal{D}|$. Each $D_i$ corresponds to a subset of vertices $U_i$ of $G$ obtained by uncontracting each element $b_j$ of $B$ in $D_i$ to the corresponding set $B_j$ of $\mathcal B$. Note that $N(U_i)$ in $G$ has size at most $t$ otherwise $D_i,N(D_i)$ would be a $K_{L,|N(D_i)|}$ subgraph in $G/\mathcal{B}$, and thus $U_i,N(U_i)$ would be a $K_{L,|N(D_i)|}$-minor model in $G$, contradicting the fact that the tree-width of $G$ is at most $t$. So  $\mathcal{P}=\{(N(U_1),U_1),\dots ,(N(U_d),U_d)\}$ is an independent family of  $t$-protrusions each with size at least $L$. 

We just have to obtain a lower bound on $d$. 
The set $B$ has size at least $n/2L$ and the leftover vertices (not appearing in the $D_i$) are at most $Lr\leq Ln/4L^2=n/4L$. So at least $n/4L$ vertices belong to the $U_i$, and since each $U_i$ has size $L$ we have $d\geq n/4L^2$, and our conclusion. 
\end{itemize}
\end{proof}

We can now show the linear protrusion property for $kH$-free graphs with no small $H$-minor model, which is the backbone of both~\Cref{thm:EP} and~\Cref{thm:EP-sparse}.

\begin{restatable}{theorem}{thmLPP}\label{thm:LPPkH}
    There exists an integer $p$ such that for every integer $L$, there exists $g_L$ such that the class of $kH$-free graphs with $H$-girth more than $g_L$ has the $(p,L)$-LPP. 
\end{restatable}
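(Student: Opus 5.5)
We take $p:=m+2t$, as fixed in the preliminaries, and follow the outline of Section~\ref{sec:lin}. Let $G$ be $kH$-free with $H$-girth more than $g_L$, where $g_L\geq 2m$ is huge. Since $K_{m,m}$ contains an $H$-minor model on $2m$ vertices, $G$ has no $K_{m,m}$ subgraph. Theorem~\ref{thm:DVO} then gives that $G$ and all its minors obtained by contracting connected sets of size $<g_L/m$ are $\delta$-degenerate with linear neighborhood complexity $c$. This holds because such contractions cannot create a $K_{m,m}$ subgraph: it would uncontract to a small $H$-model. Vertices of degree at most $1$ and isolated components can be treated as trivial protrusions, or as in the cycle case.

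\textbf{Step 1: almost all of $G$ has tree-width at most $t$.} Fix a large degree threshold $d$ and let $X$ be the set of vertices of degree more than $d$. By degeneracy, $|X|\leq 2\delta n/d$. The graph $G[V\setminus X]$ is $kH$-free with maximum degree at most $d$, so Theorem~\ref{thm:KOR} yields a set $Z$ with $|Z|\leq g(k,d)$ whose removal leaves an $H$-minor-free graph. Set $X:=X\cup Z$ and $Y:=V\setminus X$. Then $G[Y]$ has tree-width at most $t$ by Theorem~\ref{thm:RSgrid}, and $|X|\leq \varepsilon n$ with $\varepsilon$ as small as we wish, provided $n$ is large. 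If $n$ is small, the statement is trivial since $\lfloor c_Ln\rfloor=0$.

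\textbf{Step 2: protrusions inside $G[Y]$, then lifting.} Apply Theorem~\ref{thm:SLPPTW} to $G[Y]$ with a parameter $L'\gg L$. This gives $c_{L'}|Y|$ independent $2t$-protrusions $(A_i,B_i)$ of $G[Y]$, each of size at least $L'$. At least half of them have size at most $2/c_{L'}$, so they are bounded. In $G$, the boundary of $B_i$ is $A_i\cup (N(B_i)\cap X)$. Call $B_i$ \emph{bad} if it has at least $m+1$ neighbours in $X$. The good ones are $p$-protrusions of size at least $L$ in $G$, and they remain independent. So it suffices to show that only a small fraction, say a constant times $|X|+o(n)$, of the bounded-size $B_i$ are bad, while the total count is linear with constant $c_{L'}/2$ compared to $\varepsilon$.

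\textbf{Step 3 (the main obstacle): bounding bad protrusions.} This is where high $H$-girth and linear neighborhood complexity are used. Contract each bad bounded $B_i$, which is connected or can be made connected by choosing a component, to a single vertex $b_i$. The resulting graph $G^*$ still has no $K_{m,m}$, because the sets are smaller than $g_L/m$, and hence has neighborhood complexity $c$ with respect to $X$. The number of distinct traces $N(b_i)\cap X$ is therefore at most $c|X|$. If more than $c(m+1)|X|$ protrusions are bad, pigeonhole yields $m$ bad $b_i$'s sharing the same set of at least $m+1\geq m$ neighbours in $X$. This is a $K_{m,m}$ in $G^*$ and therefore a small $H$-model in $G$, a contradiction. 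There is one subtlety, analogous to the third case for cycles: the relevant boundary vertices must lie outside the $B_i$, and distinct $B_i$ must be disjoint. Both hold by construction. Hence the number of good protrusions is at least $c_{L'}(1-\varepsilon)n/2-c(m+1)\varepsilon n$, which is linear once $d$ is chosen so that $\varepsilon\ll c_{L'}$. Finally, choose $g_L$ larger than $m\cdot 2/c_{L'}$ and than the sizes needed above. The delicate part is ordering the constants: first $L'$, then $c_{L'}$, then $d$ (and hence $\varepsilon$), then $g_L$ and the threshold on $n$.
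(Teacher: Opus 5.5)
Your Steps 1 and 2 match the paper. Step 3, however, has a genuine gap: the $B_i$ need not be connected. Theorem~\ref{thm:SLPPTW} gives no connectivity, and its protrusions are typically disconnected; in the third case of its proof, $U_i$ is a union of $L$ pairwise non-adjacent components.

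Contracting a disconnected $B_i$ to a vertex $b_i$ is not a minor operation. So $G^*$ need not be $kH$-free, its neighbourhood complexity is not controlled, and a $K_{m,m}$ in $G^*$ does not lift to a small $H$-model in $G$. Passing to a single component does not rescue this. The component may have fewer than $L$ vertices, and it may have at most $m$ neighbours in $X$ even though $B_i$ has more.

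In fact your bound on bad protrusions is false. Take $H=K_4$ (so $m=6$) and let $G$ be a disjoint union of many copies of $K_{2,N}$ with $N>d$.
\begin{itemize}
\item $G$ is $K_4$-minor-free, hence $kH$-free with infinite $H$-girth.
\item $X$ is the set of degree-$N$ vertices, and $G[Y]$ is edgeless.
\item Theorem~\ref{thm:SLPPTW} may legitimately return sets of $L'$ vertices taken from $L'$ distinct copies.
\item Each such set has $2L'\ge m+1$ neighbours in $X$, so every protrusion in the family is bad.
\end{itemize}
There are about $c_{L'}|Y|\gg c(m+1)|X|$ of them, and none is good.

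The paper closes this with a case split. If many protrusions contain a component of size at least $L$, keep one such component from each. It is connected and its $G[Y]$-neighbourhood lies in $A_i$, so your good/bad argument then goes through essentially verbatim; this is the paper's first case.

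Otherwise, many protrusions have all components smaller than $L$, so each has at least $L'/L$ components. Split them into these small, connected, pairwise independent components and contract each one, which is now a genuine minor operation. Apply linear neighbourhood complexity relative to $C'$, the union of $X$ and the $G[Y]$-boundaries. The size of $C'$ is $O(t)$ times the number of protrusions, far below the number of components once $L'$ is of order $ctL^2$. Then greedily group components with identical neighbourhoods into groups of $L$. If a group's common neighbourhood had at least $m$ vertices, the contracted graph would contain $K_{L,m}\supseteq K_{m,m}$; hence each group is an $m$-protrusion of size at least $L$.

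These new protrusions combine components drawn from different original $B_i$'s. In the example above, they are the middle vertices of a single copy of $K_{2,N}$. Filtering the original family into good and bad members cannot produce them, and that regrouping step is the missing idea.
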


\begin{proof}
An equivalent task is to find $N$, $p$, $c_L$ and $g_L$ such that every $kH$-free graph $G$ with at least $N$ vertices and $H$-girth more than $g_L$ has a family $\mathcal{P}$ of $\lfloor c_Ln\rfloor$ pairwise independent $p$-protrusions of size at least $L$. Indeed, to translate into a statement without a lower bound on $n$, we just have to set a new parameter $c'_L=\min(c_L,1/N)$. The introduction of $N$ gives more flexibility for parameter tuning. Let us recall them:

\begin{itemize}
    \item $m$ is the maximum of $|V(H)|$ and $|E(H)|$. Note that $H$ is therefore a minor of $K_{m,m}$.
    \item  $t$ is the maximum tree-width of an $H$-minor free graph, and $p:=2t+m$.
    \item By Theorem~\ref{thm:DVO}, $kH$-free graphs with no $K_{m,m}$ subgraph have neighborhood complexity at most $c$. Hence $|\{N(v)\cap S:v\in V\}|\leq c.|S|$ for every non-empty $S \subseteq V(G)$. 
    \item By Theorem~\ref{thm:DVO}, $kH$-free graphs with no $K_{m,m}$ subgraph have average degree at most $2\delta$.
    \item By Theorem~\ref{thm:SLPPTW}, there is a constant $\varepsilon >0$ such that every graph $T$ with tree-width at most $t$ has a family of $\varepsilon |V(T)|$ independent $2t$-protrusions with size at least $12ctL^2$.
\end{itemize}

We assume that $G$ has $H$-girth more than $g_L:=\max(32m/\varepsilon,4m/3ctL\varepsilon)$. In particular $G$ does not contain any $K_{m,m}$ subgraph, and therefore its average degree is at most $2\delta$. We also set our parameter $c_L:=\min(\varepsilon /8, ct\varepsilon /2)$.

Consider some integer $d$ (to be chosen later) and partition $G$ into the set $Y$ of vertices of degree at most $d$ and $X:=V\setminus Y$. Observe that the size of $X$ is at most $2\delta n/d$ by maximum average degree.
By Theorem~\ref{thm:KOR}, since the graph $G[Y]$ has maximum degree $d$ and is $kH$-free, it contains a subset $X'$ of size $s'$ only depending on $d$ and $k$ such that $G[Y]\setminus X'$ is $H$-minor free, and thus has tree-width at most $t$.

Set $N=ds'/2\delta$, and thus, since $n\geq N$, we have $s'\leq 2\delta n/d$. We rename both $X\cup X'$ as $X$ and $Y\setminus X'$ as $Y$. 
Note that since $|X| \leq 4\delta n/d$, the ratio $|Y|/|X|$ can be made arbitrarily large by a suitable choice of $d$ and $N$. We then assume from now on that we have both  $\varepsilon |Y|\geq8cm|X|$ and 
$\varepsilon t|Y|\geq|X|$.

By Theorem~\ref{thm:SLPPTW}, there is  a family $\mathcal{P}$ of $8\ell:=\varepsilon |Y|$ independent $2t$-protrusions of $G[Y]$, each of size at least $12ctL^2$. We  distinguish two cases.

\begin{itemize}
    \item There are $4\ell$ protrusions of $\mathcal{P}$ with a connected component of size at least $L$. Let us call $\mathcal{P}'$ the collection of $4 \ell$ protrusions obtained from $\mathcal{P}$ by selecting a single component of size at least $L$ in each. If $2\ell$ protrusions of $\mathcal{P}'$ have neighborhoods in $X$ of size at most $m$, we have our conclusion since they consist of $\varepsilon |Y|/4\geq \varepsilon n/8\geq c_Ln$ many $p$-protrusions of $G$ since $|Y| \geq n/2$. Let us reach a contradiction in the other case. Assume that $2\ell$ protrusions $\mathcal{P}''$ of $\mathcal{P}'$ have neighborhoods of size more than $m$ in $X$. Let $\mathcal{B}'':=\{B_1,B_2,\dots,B_{\ell}\}$ be the interior of the $\ell$ smallest protrusions of $\mathcal{P}''$. Each $B_i$ has size at most $n/\ell$, otherwise the union of $\mathcal{P}'$ would exceed $n$. Thus for each $i$ we have $|B_i|\leq n/\ell=8n/\varepsilon |Y|\leq 16/\varepsilon$ since $|Y|\geq n/2$. Consider the induced graph $G[(\bigcup_{i=1}^{\ell} B_i)\cup X]$ and contract each $B_i$ to a single vertex $b_i$ to form a new graph $G'$. Note that $G'$ is $kH$-free since it is an induced minor of $G$. Note also that $G'$ has no $K_{m,m}$ subgraph since this would be an $H$-minor model with size at most $32m/\varepsilon$. Since $b_i$ has at least $m$ neighbors in $X$ and $\ell>c.m.|X|$, by linear neighborhood complexity (and pigeonhole principle), there are $m$ vertices $b_i$ with the same neighborhood. This gives a $K_{m,m}$ and a contradiction. 
    \item There are $4\ell$ protrusions of $\mathcal{P}$ with no connected component of size at least $L$. Splitting these protrusions into connected components, we form a family $\mathcal{P}'$ of at least $12ctL.4\ell$ connected independent $2t$-protrusions. Note that the boundary $C$ of $\mathcal{P}'$ in $G[Y]$ has size at most $8\ell t$ since each protrusion of $\mathcal{P}$ has boundary size at most $2t$. Since the size of $X$ is at most $8\ell t$, the boundary $C'$ of $\mathcal{P}'$ in $G$ has size at most $16\ell t$. Consider $\mathcal{P}''$ consisting of the smallest $24ctL\ell$ protrusions of $\mathcal{P}'$ and observe that each of their size is at most $n/24ctL\ell=8n/24ctL \varepsilon|Y|\leq 2/3ctL\varepsilon$ since $|Y| \geq n/2$. Contract each protrusion $B_i$ of  $\mathcal{P}''$ to a single vertex $b_i$ and observe that the resulting graph $G'$ is $kH$-free with no $K_{m,m}$ (since this would give an $H$-minor model with size $4m/3ctL\varepsilon$). By linear neighborhood complexity, the number of distinct neighborhoods of the $b_i's$ in $G'$ is at most $c|C'|\leq 16ct\ell $. Now, we greedily group the $b_i's$ with identical neighborhoods into subsets $B'_j$ of size $L$ to form a family of protrusions 
    $\mathcal{S}=\{B'_1,\dots, B'_s\}$. The remaining $b_i's$ which cannot be grouped are at most $L$ times the number of distinct neighborhoods, that is $16ctL\ell$. The number of $b_i's$ is $24ctL\ell$, so at least $8ctL\ell$ of them belong to some $B'_j$, in particular $s\geq 8ct\ell\geq ct\varepsilon |Y|\geq ct\varepsilon n/2\geq c_Ln$. Each $B'_j$ has neighborhood size less than $m$, otherwise $G'$ would contain a $K_{L,m}$ subgraph, which contradicts $K_{m,m}$-freeness (assuming w.l.o.g $L \geq m$). To conclude observe that the (uncontracted) $B_j$'s forms a family of $ct\varepsilon n/2$ many independent $m$-protrusions of size at least $L$. This is our solution.
\end{itemize}

\end{proof}

\section{Induced Erd\H{o}s--P\'osa property for planar minors}\label{sec:EPP}

We begin by showing the Erd\H{o}s--P\'osa property for connected graphs $H$.

\begin{theorem}\label{thm:EP-connected}
For any connected planar graph $H$, there exists a function $f$ such that every $kH$-free graph $G$ satisfies $\gamma_H(G)\leq f(k)$.
\end{theorem}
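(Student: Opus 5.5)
Our plan is induction on $k$ combined with a minimum counterexample argument, following the overview in Section~\ref{ssec:overview-EPP}. For $k=1$ the graph is $H$-minor free and $f(1)=0$. Assume $f(k-1)$ exists. We first fix the reduction threshold. Since $(kh+p)$-folios and $(p,h+p)$-neighborhood folios of $p$-boundaried graphs take boundedly many values, let $M$ be the number of possible pairs of such folios. For each realizable pair, fix a smallest $p$-boundaried graph with $H$-minor free interior realizing it, and let $L$ exceed the sizes of all these representatives. Then any $p$-protrusion $P=(A,B)$ of size at least $L$ with $G[B]$ $H$-minor free can be replaced by a strictly smaller $P'$. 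By Lemma~\ref{lem:Nhittingred} the resulting graph $G'$ is $kH$-free and satisfies $\gamma_H(G')=\gamma_H(G)$. Here $p$ is the constant from Theorem~\ref{thm:LPPkH}, and $g_L$ is the corresponding girth bound. We then set $f(k):=\max\bigl(g_L+f(k-1),\,2f(k-1)+p\bigr)$, enlarged further if the last step below needs it.

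Now let $G$ be a counterexample with $\gamma_H(G)>f(k)$ that minimizes $|V(G)|$.

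\textbf{Step 1 (large $H$-girth).} Suppose $G$ has an $H$-minor model $\mathcal H$ on at most $g_L$ vertices. Then $G\setminus N[\mathcal H]$ is $(k-1)H$-free, because adding $\mathcal H$ to a packing there gives $k$ independent models. Hence $\gamma_H(G)\le |\mathcal H|+f(k-1)\le f(k)$, a contradiction. So the $H$-girth of $G$ exceeds $g_L$.

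\textbf{Step 2 (a large protrusion).} We may assume $G$ is not $H$-minor free and, by the first step, that $G$ has at least $g_L$ vertices. Taking $g_L$ large relative to the constants, Theorem~\ref{thm:LPPkH} gives at least one $p$-protrusion, and in fact at least two independent $p$-protrusions $(A_1,B_1)$ and $(A_2,B_2)$ of size at least $L$. We need one whose interior is $H$-minor free. Suppose both $G[B_1]$ and $G[B_2]$ contain $H$ minors. Consider the graph $G-A_1$, and in particular the side $B_1$ against the rest $R=V\setminus(A_1\cup B_1)$. Since $H$ is connected, every $H$-model in $G-A_1$ lies entirely in $B_1$ or entirely in $R$, and the two sides are anticomplete. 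Because each side contains an $H$-model, each side is $(k-1)H$-free. Here $R$ contains $B_2$ minus $A_1$. To handle this cleanly, we instead take the first protrusion and argue directly: if $G[B_1]$ contains an $H$-model and $G[R]$ contains one as well, then $\gamma_H(G)\le |A_1|+2f(k-1)\le f(k)$, since dominating $A_1$ (a subset of $N[A_1]$) plus the two $(k-1)H$-free hitting sets works. If $G[R]$ is $H$-minor free, then $\gamma_H(G)\le |A_1|+\gamma_H(G[B_1])$, and we can only conclude a bound of $p+f(k-1)$ if $G[B_1]$ is $(k-1)H$-free. This is not automatic, which is why we use the second, independent protrusion. $B_2\subseteq R$ holds whenever $B_2\cap A_1=\emptyset$. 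In general we choose among linearly many independent protrusions one, $B_2$, that avoids $A_1$ and meets $A_1$ only on its boundary; this is possible since $|A_1|\le p$ and the $B_i$ are disjoint. Then either some $G[B_i]$ is $H$-minor free, which is what we want, or both sides of the cut $A_1$ contain $H$, which gives the bound above.

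\textbf{Step 3 (reduction).} We now have a $p$-protrusion of size at least $L$ with $H$-minor free interior. Replacing it by its representative yields a smaller $kH$-free graph with the same $\gamma_H$, which contradicts minimality.

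The main obstacle I expect is bookkeeping in the definition of $L$: the representative must itself have an $H$-minor free interior, which follows from having the same $h$-folio, and Lemma~\ref{lem:Nhittingred} must apply with boundary size exactly $p$, which we handle by padding the boundary. Selecting a suitable protrusion in Step 2 is the other delicate point.
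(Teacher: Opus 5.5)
Your proof is correct, but it finds the $H$-minor-free protrusion by a different route than the paper.

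\textbf{The paper's route.} It sets $f(k)=\max(k/c_L,\,g_L+f(k-1))$. A minimal counterexample has $\gamma_H(G)>f(k)$ and $\gamma_H(G)\le n$, so $n>f(k)\ge k/c_L$. Theorem~\ref{thm:LPPkH} then yields at least $k$ independent protrusions of size at least $L$. Their interiors are pairwise disjoint and anticomplete, so at most $k-1$ of them can contain an $H$-minor model; otherwise they would form $k$ independent models. One protrusion is therefore $H$-minor free and can be reduced. No cut argument and no second use of the induction hypothesis is needed.

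\textbf{Your route.} Your Step~2 uses only two protrusions. When both interiors contain $H$, you split $G-A_1$ into the anticomplete sides $B_1$ and $R\supseteq B_2$, each $(k-1)H$-free, which gives $\gamma_H(G)\le p+2f(k-1)$. This is valid; it is essentially the argument sketched in the paper's overview in Section~\ref{ssec:overview-EPP}. Its cost is the doubling recurrence $f(k)\ge 2f(k-1)+p$, whereas the paper's recurrence is additive in $f(k-1)$.

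\textbf{Two smaller points.}
\begin{itemize}
\item Your concern about $B_2$ meeting $A_1$ is unnecessary. Replace $A_1$ by $N(B_1)$; independence of the two protrusions already gives $B_2\cap N(B_1)=\emptyset$, so two protrusions suffice.
\item Enlarging $g_L$ to guarantee enough protrusions is legitimate, since the linear protrusion property still holds with the same $c_L$ at a larger girth threshold. The paper gets the same effect from $n>f(k)$.
\end{itemize}
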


\begin{proof}

We use our definition for $m$ and $t$ and $p=2t+m$. Let us set $L$ to be such that for every $p$-boundaried graph $P=(A,B)$ of size at least $L$, there exists a $p$-boundaried graph $P'=(A',B')$ with smaller size, same $(kh+p)$-folio, and same $(p,h+p)$-neighborhood folio. Such a bound $L$ exists since there is only a bounded number of different types.

By Theorem~\ref{thm:LPPkH}, there exists $g_L$ and $c_L$ such that every $kH$-free graph $G$ with $H$-girth more than $g_L$ contains $\lfloor c_L n\rfloor$ many $p$-protrusions of size at least $L$.

We show $f(2):=\max (g_L,2/c_L)$. Assume for contradiction that $G$ is a $2H$-free graph with minimum size such that $\gamma_H(G)>f(2)$. In particular $n>2/c_L$. If $G$ contains an $H$-minor model $\mathcal{H}$ of size at most $g_L$, then $G\setminus N[V(\mathcal{H})]$ is $H$-minor free, a contradiction. Otherwise, by Theorem~\ref{thm:LPPkH}, there are $\lfloor c_L n\rfloor>2$ independent $p$-protrusions of size at least $L$. Since $G$ is $2H$-free, one of them, say $P=(A,B)$, is such that $G[B]$ is $H$-minor free. But then there exists a $p$-boundaried graph $P'=(A',B')$ with smaller size, same $(2h+p)$-folio, and same $(p,h+p)$-neighborhood folio.
Thus by Lemma~\ref{lem:Nhittingred}, the (smaller) graph $G'$ obtained by replacing $P$ by $P'$ is $2H$-free and satisfies $\gamma _H(G')=\gamma _H(G)$, a contradiction to the minimality of $G$.

By applying the same argument, we obtain more generally that $f(k)=\max(k/c_L,g_L+f(k-1))$.
\end{proof}

To handle the case where $H$ is disconnected, we need the following straightforward adaptation of \Cref{lem:Nhittingred}, allowing to reduce protrusions when they contain no minor of \textsl{any} connected component of $H$.
\begin{lemma} \label{lem:Nhittingred-disconnected}
Let $H$ be a planar graph, and $G$ be a $kH$-free graph with a $p$-protrusion $P=(A,B)$ such that $G[B]$ is $C$-minor free for every connected component $C$ of $H$.
Then, the graph $G'$ obtained by replacing $P$ by $P'=(A,B')$ with the same $(kh+p)$-folio and the same $(p,h+p)$-neighborhood folio is $kH$-free and $\gamma_H(G)=\gamma_H(G')$.
\end{lemma}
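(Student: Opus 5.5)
The plan is to follow the proof of Lemma~\ref{lem:Nhittingred} verbatim, replacing its single use of the connectivity of $H$ by the new hypothesis on $G[B]$. The $kH$-freeness of $G'$ is immediate from Corollary~\ref{cor:kHreduction}, which never used connectivity. By symmetry, it then suffices to show $\gamma_H(G)\leq\gamma_H(G')$. For the reverse inequality, note that $P$ and $P'$ have the same $h$-folio. Every connected component $C$ of $H$ has at most $h$ vertices, so $G'[B']$ is also $C$-minor free for every component $C$, and the hypotheses are symmetric in $G$ and $G'$.

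Take a minimum $H$-dominating set $D'$ of $G'$. The key step, and the only one where connectivity was used before, is the bound $|D'\cap(A\cup B')|\leq p$. I would show that $D'':=(D'\setminus B')\cup A$ is again an $H$-dominating set. Suppose $G'\setminus N[D'']$ contained an $H$-minor model $\mathcal H$. Every branch set of $\mathcal H$ avoids $A\subseteq N[D'']$. A branch set is connected, so it lies either entirely in $B'$ or entirely in $V(G')\setminus(A\cup B')$. Edges of $H$ preserve connectivity between branch sets, so each component $C$ of $H$ is modelled either entirely inside $B'$ or entirely outside $A\cup B'$. The former is impossible by $C$-minor-freeness of $G'[B']$. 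Hence $\mathcal H$ avoids $A\cup B'$.

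It remains to check that $\mathcal H$ also avoids $N[D']$. Vertices outside $A\cup B'$ dominated by $D'$ are dominated by $D'\setminus B'$, since $N(B')\subseteq A$, and $D'\setminus B'\subseteq D''$. So $\mathcal H$ is an $H$-minor model in $G'\setminus N[D']$, a contradiction. Thus if $|D'\cap(A\cup B')|>p$, then $|D''|<|D'|$ contradicts minimality, and we may assume $Y'=D'\cap(A\cup B')$ has size at most $p$.

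From here the argument is literally that of Lemma~\ref{lem:Nhittingred}. Set $X'=N(D'\setminus(A\cup B'))\cap A$ and use the equal $(p,h+p)$-neighborhood folios to find $Y\subseteq A\cup B$ with matching signatures $\sigma(Y)$ and $\sigma(N[Y])$, with $|Y|=|Y'|$, and with $P\setminus(X'\cup N[Y])$ and $P'\setminus(X'\cup N[Y'])$ having the same $(h+p)$-folio. Then put $D=(D'\setminus Y')\cup Y$. The graph $G\setminus N[D]$ arises from $G'\setminus N[D']$ by this protrusion replacement, so Corollary~\ref{cor:kHreduction} with $k=1$ shows it is $H$-minor free, giving $\gamma_H(G)\leq|D|=|D'|$.

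The main obstacle is the exchange step. One has to be careful that a model of a disconnected $H$ may have some components inside $B'$ and others outside, and the component-wise hypothesis is exactly what rules out the inside ones. Nothing else in the proof depends on $H$ being connected.
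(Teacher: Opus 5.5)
Your proposal is correct and matches the paper's proof, which likewise reruns Lemma~\ref{lem:Nhittingred} verbatim and changes only the justification of $|D'\cap(A\cup B')|\leq p$. The paper simply asserts that bound. You verify it: $(D'\setminus B')\cup A$ stays $H$-dominating because, once $A$ is removed, every component of an $H$-model lies wholly in $B'$ or wholly outside $A\cup B'$, and the first option is excluded by the componentwise minor-freeness of $G'[B']$, which is inherited from $G[B]$ through the common folio.
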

\begin{proof}
    The proof is exactly the same as~\Cref{lem:Nhittingred}. This follows from the fact that any minimum $H$-dominating set $D'$ in $G'$ intersects $A \cup B'$ on at most $p$ elements.  
\end{proof}

With~\Cref{lem:Nhittingred-disconnected} in hand, we are ready to show the induced Erd\H{o}s--P\'osa property for all planar graphs.
\thmEP*
\begin{proof}
    We show the existence of  $f(k,H)$ by induction on $k$ and the number of components $d$ of $H$. The case $d=1$ for any $k$ corresponds to~\Cref{thm:EP-connected}.
    Consider $d \geq 2$ and assume that the result holds for any planar graph $H$ with less than $d$ connected components. Let $H$ be a planar graph with connected components $H_1,...,H_d$. Recall that $h$ denotes the number of vertices of $H$.
    Set now $L$ as well as $g_L,c_L$ as in the proof of~\Cref{thm:EP-connected}, assume without loss of generality $L \geq p$.
    Define $f(k,H) = \max(\max_i f(k,H \setminus H_i) + 8dk/c_L,g_L+f(k-1,H))$.
    Let $G$ be a graph of minimum size $n$ that is $kH$-free, and such that $\gamma_H(G) > f(k,H)$. If $G$ has $H$-girth at most $g_L$, we reduce to the $(k-1)H$-free case, for a bound of $g_L+f(k-1,H)$. Otherwise, we apply~\Cref{thm:LPPkH} to get a set of $\lfloor c_L n \rfloor$ independent $p$-protrusions of size at least $L$. 

    We first deal with the case where at least $c_L n / 2$ of these protrusions contain a minor of some $H_i$ in their interior.
    Then, by the pigeonhole principle, at least $c_L n / 2d$ protrusions contain an $H_1$-minor model, without loss of generality.
    Of those, half of them each have size at most $4d/c_L$.
    If $k \geq c_Ln/4d$, $n$ is at most $4dk/c_L$, so we have our conclusion by taking the whole $V(G)$ in our $H$-dominating set.
    If not, we consider a subset $\{P_i = (A_i,B_i):i=1,\dots,k\}$ of these latter protrusions. Let then $X_1 = \bigcup_{i \leq k} (A_i \cup B_i)$, and note that $|X_1| \leq k(p+4d/c_L) \leq 8kd/c_L$ since $p \leq L \leq 4d/c_L$.
    Observe now that the graph $G \setminus X_1$ is $k(H \setminus H_1)$-free, because $X_1$ contains $k$ independent copies of $H_1$ with no neighbors in $G \setminus X_1$. By induction, it admits a minimum $(H \setminus H_1)$-dominating set $X'$ of size at most $f(k,H \setminus H_1)$. Then, $X = X' \cup X_1$ is a dominating set of $(H \setminus H_1)$-minor models, thus of $H$-minor models. Moreover $|X| \leq f(k,H \setminus H_1) +8dk/c_L$, which concludes this case.
    
    We may now assume that at least $c_L n / 2$ protrusions do not contain a minor of any $H_i$.
    If $n < 2 / c_L$, then the whole set $V(G)$ is a $H$-dominating set of size at most $f(k,H)$ as desired, so we may assume that there exists at least one such protrusion $P = (A,B)$.
    By definition of $L$, there is a $p$-boundaried graph $P'=(A',B')$ with smaller size, same $(kh+p)$-folio, and same $(p,h+p)$-neighborhood folio.
    Then, replacing $P$ with $P'$ yields by~\Cref{lem:Nhittingred-disconnected} a graph $G'$ that is still $kH$-free and satisfies $\gamma_H(G') = \gamma_H(G)$, contradicting the minimality of $G$.
\end{proof}

\section{Logarithmic size hitting set and Maximum Independent Set}\label{sec:MIS}

We now prove the existence of a logarithmic size hitting set for sparse $kH$-free graphs. We first need a sparsification lemma which asserts that in every sparse $kH$-free graph $G$, there is a bounded number of vertices which intersects all small $H$-minor models.

\begin{lemma}\label{lem:sparsification}
For every $g$ and $r$, there exists $h(g,r)$ such that every $kH$-free graph $G$ on $n$ vertices with no $K_{r,r}$ subgraph has a set of vertices $X$ with size at most $h(g,r)$ such that $G\setminus X$ has $H$-girth more than $g$. 
\end{lemma}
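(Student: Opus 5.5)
Take a maximum family $\mathcal{M}=\{M_1,\dots,M_N\}$ of pairwise vertex-disjoint $H$-minor models in $G$, each of size at most $g$ (size meaning the number of vertices in the union of its branch sets). Set $X:=\bigcup_i V(M_i)$, so $|X|\le gN$. By maximality, every $H$-minor model of size at most $g$ in $G$ meets $X$. Hence $G\setminus X$ has $H$-girth more than $g$. It remains to bound $N$ by a function of $g$, $r$ (and $k$, $H$), and then set $h(g,r):=g\cdot N_{\max}$.

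To bound $N$, build an auxiliary graph $\Gamma$ on vertex set $[N]$. Put $ij\in E(\Gamma)$ when some edge of $G$ joins $V(M_i)$ to $V(M_j)$. Since $G$ is $kH$-free, any $k$ of the $M_i$ contain an adjacent pair, so $\Gamma$ has independence number less than $k$. By Turán's theorem, $\Gamma$ therefore has at least $N^2/(2(k-1))-N/2=\Omega(N^2/k)$ edges. Each edge of $\Gamma$ is witnessed by a distinct edge of $G$ between different models. So the induced subgraph $G':=G[X]$ has at most $gN$ vertices and $\Omega(N^2/k)$ edges. Its average degree is $\Omega(N/(gk))$, which grows linearly in $N$.

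Now use sparsity. The graph $G'$ is an induced subgraph of $G$, so it is $kH$-free and $K_{r,r}$-subgraph-free. By Theorem~\ref{thm:DVO}, applied to the class of $kH$-free graphs without $K_{r,r}$ subgraphs, $G'$ is $\delta_r$-degenerate for a constant $\delta_r$ depending only on $r$, $k$ and $H$. Hence $|E(G')|\le \delta_r gN$. Comparing the two bounds gives $N^2/(2(k-1))-N/2\le \delta_r gN$, so $N\le 2(k-1)(\delta_r g+1/2)$, a bound depending only on $g$ and $r$ for fixed $k$ and $H$. Alternatively, one could avoid Theorem~\ref{thm:DVO} and use the Kővári--Sós--Turán theorem directly: a $K_{r,r}$-free graph on $gN$ vertices has $O(r^{1/r}(gN)^{2-1/r})$ edges, which is $o(N^2)$ and again bounds $N$.

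The only subtle point is that an edge of $\Gamma$ must be certified by an actual edge of $G$ between the two models. This is exactly the definition of $kH$-freeness, which requires an edge between $\mathcal{H}_i$ and $\mathcal{H}_j$, so the edge count transfers injectively to $G'$. I do not expect a real obstacle; the argument is the quadratic-density-versus-sparsity count sketched in Section~\ref{ssec:overview-MIS}.
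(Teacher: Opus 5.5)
Your proposal is correct and follows the paper's proof. Both arguments take a maximum packing of $H$-minor models of size at most $g$, apply Tur\'an's theorem to the contracted graph (which has independence number less than $k$) to get $\Omega(N^2/k)$ edges on at most $gN$ vertices, bound $N$ by sparsity, and use maximality to get the $H$-girth bound. The only difference is the final step: the paper uses the K\H{o}v\'ari--S\'os--Tur\'an bound (your stated alternative), while your main route via the degeneracy from Theorem~\ref{thm:DVO} is equally valid and gives the explicit linear bound $N\le 2(k-1)(\delta_r g+1/2)$.
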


\begin{proof}
   Consider a $kH$-free graph $G$ with no $K_{r,r}$ subgraph and a disjoint union $\mathcal{H}_1,\dots ,\mathcal{H}_N$ of $H$-minor models in $G$ each with size at most $g$. We assume also that $N$ is as large as possible. Call $G'$ the subgraph induced by $\bigcup \mathcal{H}_i$. The graph $G''$ obtained by contracting in $G'$ each $\mathcal{H}_i$ to a single vertex has maximum  independent set at most $k-1$. By Tur\'an's theorem, $G''$ has at least $(k-1){\lfloor\frac{N}{k-1}\rfloor\choose 2}$ edges. So the number of edges of $G'$ is $\Omega(N^2/k)$ and its number of vertices is at most $gN$. Therefore $G'$ has a quadratic number of edges. Since graphs without $K_{r,r}$ subgraphs have average degree $O(n^{1-1/r})$, the number of vertices of $G'$ is bounded by some value $h(g,r)$. By maximality of $N$, deleting $V(G')$ from $G$ leaves a graph with $H$-girth more than $g$.
\end{proof}

\subsection{Logarithmic hitting set}

\begin{theorem}\label{thm:EP-sparse-connected}
For any connected planar graph $H$, every $kH$-free graph $G$ on $n$ vertices with no $K_{r,r}$ subgraph satisfies $\tau_H(G)=O_{r,k}(\log n)$.
\end{theorem}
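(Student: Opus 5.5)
We follow the strategy sketched in Section~\ref{ssec:overview-MIS}: an iterative process that alternately sparsifies (deleting a bounded number of vertices) and performs parallel protrusion reduction. Each round shrinks the number of vertices by a constant factor while keeping $kH$-freeness, $K_{r',r'}$-freeness for a fixed $r'$, and the relation between $\tau_H$ of the old and new graph (up to an additive constant). Set $p:=2t+m$ as in Theorem~\ref{thm:LPPkH}. Choose $L$ such that every $p$-boundaried graph of size at least $L$ has a strictly smaller $p$-boundaried graph with the same $(kh+p)$-folio, the same $(p,h+p)$-folio set, and the same $(2r'+p)$-folio; this exists since there are boundedly many types. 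Let $g_L,c_L$ be given by Theorem~\ref{thm:LPPkH}. Fix $r':=\max(r,2p)$, and increase $g_L$ if needed so that $g_L\geq 4p^2$.

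One round on a current graph $G_i$ ($kH$-free, no $K_{r',r'}$ subgraph, $n_i$ vertices) goes as follows. First apply Lemma~\ref{lem:sparsification} with $g=g_L$ and $r'$ to get a set $X_i$ of size at most $h(g_L,r')$ with $G_i\setminus X_i$ of $H$-girth more than $g_L$. The graph $G_i\setminus X_i$ is still $kH$-free. Theorem~\ref{thm:LPPkH} then yields $\lfloor c_L n_i'\rfloor$ independent $p$-protrusions of size at least $L$. All but at most $k-1$ of them have $H$-minor-free interior, since $k$ interiors containing $H$ would be $k$ independent $H$-models. Replace each of these protrusions by a strictly smaller equivalent one. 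Because the interiors are pairwise disjoint and independent, and each boundary is outside the other interiors, the replacements commute. Applying Lemma~\ref{lem:hittingred} one protrusion at a time, the result $G_{i+1}$ is $kH$-free with $\tau_H(G_{i+1})=\tau_H(G_i\setminus X_i)$. Hence $\tau_H(G_i)\leq |X_i|+\tau_H(G_{i+1})$, and $n_{i+1}\leq n_i-(c_Ln_i-k-h(g_L,r'))$, which is at most $(1-c_L/2)n_i$ once $n_i$ exceeds a constant $N_0$.

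The main obstacle is the invariant that $G_{i+1}$ has no $K_{r',r'}$ subgraph, which is needed to apply Lemma~\ref{lem:sparsification} again. In $G_i\setminus X_i$, which has large $H$-girth, there is no $K_{m,m}$, hence no $K_{2p,2p}$. Now take a $K_{2p,2p}$ in $G_{i+1}$. Any vertex of a new interior has all its neighbours inside that protrusion's interior or boundary. The complete bipartite graph is connected, and its sides have size $2p>p$. So one cannot pass from inside a protrusion to the outside through a boundary of size at most $p$ while having both sides exceed $p$ outside. This forces either the whole $K_{2p,2p}$ to avoid all new interiors, or it to lie inside one protrusion (interior plus boundary). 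The second case is excluded because the folios agree: $K_{2p,2p}$ as an induced-minor-detectable subgraph lies in the $(4p+p)$-folio. To be careful here, I would phrase subgraph containment of $K_{2p,2p}$ as containment of one of finitely many induced minors on $4p$ vertices, whose bags are singletons or arbitrary connected sets; a connected-bag model of a bipartite supergraph structure with singleton requirement is the delicate point. A cleaner fallback is to forbid $K_{m,m}$ minors of small size, i.e.\ to keep a weaker invariant such as ``no $H$-minor model on at most $4p^2$ vertices is created''. That invariant is preserved by Lemma~\ref{lem:kHreduction} (a model crossing the boundary decomposes into at most $f+p$ boundaried pieces). It still implies no $K_{m,m}$ subgraph, and therefore degeneracy, which is what Lemma~\ref{lem:sparsification} really uses.

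Iterating, after $O(\log n/c_L)$ rounds the graph has at most $N_0$ vertices, so its hitting number is at most $N_0$. Summing gives
\[
\tau_H(G)\leq N_0+h(g_L,r')\cdot O(\log n)=O_{r,k}(\log n).
\]
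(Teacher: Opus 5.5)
Your overall scheme is the paper's: sparsify with Lemma~\ref{lem:sparsification}, apply Theorem~\ref{thm:LPPkH}, reduce in parallel all protrusions with $H$-minor-free interior via Lemma~\ref{lem:hittingred}, check that $K_{2p,2p}$-freeness survives, and iterate. The paper runs this as an induction on $n$ after one initial sparsification to $K_{2p,2p}$-freeness, which is equivalent to your $r'=\max(r,2p)$. Your boundary-crossing argument for a $K_{2p,2p}$ that meets a new interior is correct.

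\textbf{The gap.} It is in the remaining case: a $K_{2p,2p}$ subgraph lying inside $G_{i+1}[A\cup B']$ for a single reduced protrusion. As you suspect, folios only record induced minors. So equality of folios transfers this only to a $K_{2p,2p}$ \emph{minor} of $G_i[A\cup B]$. The only fact you establish on the old side is that large $H$-girth gives no $K_{2p,2p}$ \emph{subgraph}, and that does not rule out a minor. Subgraph containment is not detectable by induced minors in general: $C_8$ has $C_4=K_{2,2}$ as an induced minor but not as a subgraph. You leave this as ``the delicate point'' without closing it.

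\textbf{How to close it.} This is exactly how the paper concludes. Only the direction new $\Rightarrow$ old is needed, and on the old side a $K_{2p,2p}$ minor is excluded for a different reason. The graph $G_i[B]$ is $H$-minor free (that is why $P$ was reduced), and $|A|\le p$. Discarding the at most $p$ branch sets that meet $A$ leaves at least $p$ branch sets on each side, all inside $B$. This gives a $K_{p,p}$-minor in $G_i[B]$, hence an $H$-minor since $p\ge m$, a contradiction.

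\textbf{Your fallback does not work.} Lemma~\ref{lem:kHreduction} preserves which graphs on at most $f$ vertices are induced minors. It gives no control on the size of the pieces $W_i^j$ that replace the parts of a model inside the protrusion. So an invariant like ``no $H$-minor model on at most $4p^2$ vertices'' is not preserved. A long cycle running through a long path inside $B$ can become short once $B$ is replaced by a smaller equivalent protrusion. This is precisely why the paper notes that the $H$-girth may drop after reduction. Replacing the fallback by the observation above makes your proof complete.
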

\begin{proof}
Again $p=m+2t$. Let us set $L$ to be such that for every $p$-boundaried graph $P=(A,B)$ of size at least $L$ there exists a $p$-boundaried graph $P'=(A',B')$ with smaller size, same $(kh+4p)$-folio, and same $(p,h+p)$-folio set. Such a bound $L$ exists since there is only a bounded number of different types.

By Theorem~\ref{thm:LPPkH}, there exists $g_L$ and $c_L$ such that every $kH$-free graph $G$ with $H$-girth more than $g_L$ contains $\lfloor c_L n\rfloor$ many independent $p$-protrusions of size at least $L$.

We now proceed by induction on $n$ and show the existence of a constant $C$ such that every $kH$-free graph on $n$ vertices without $K_{2p,2p}$ subgraph satisfies $\tau_H(G)\leq C\log n$.

By Lemma~\ref{lem:sparsification}, there exists $X\subseteq V(G)$ with size at most $h(g_L,2p)$ such that $G_1:=G\setminus X$ has $H$-girth more than $g_L$. We denote by $n_1$ the number of vertices of $G_1$.
By Theorem~\ref{thm:LPPkH}, there are $\lfloor c_L n_1\rfloor$ independent $p$-protrusions in $G_1$ of size at least $L$. At most $k$ of these protrusions contain an $H$-minor model. If $n_1\leq 2k/c_L$, the number of vertices of $G$ is at most $2k/c_L+h(g_L,2p)$, and we set $C$ such that $C\log n\geq 2k/c_L+h(g_L,2p)$. Otherwise, by the choice of $L$ we can reduce  all the (at least $c_Ln_1/2$ many) protrusions without $H$-minor model to smaller ones with same $(kh+4p)$-folio and same $(p,h+p)$-folio set. This (iterated or parallel) sequence of reductions terminates on a graph $G_2$, which by  Lemma~\ref{lem:hittingred} is still $kH$-free and satisfies $\tau_H(G_2)=\tau_H(G_1)$.

Observe that every protrusion $P=(A,B)$ which is reduced (to some $P'=(A,B')$) is such that $B$ does not contain any $H$-minor model, and thus in particular $G_1[A\cup B]$ does not contain a $K_{2p,2p}$-minor model. Since $P$ and $P'$ have the same $(kh+4p)$-folio, $G_2[A\cup B]$ does not contain a $K_{2p,2p}$ subgraph. Also, $G_1$ and $G_2$ coincide outside of the protrusions, and a $K_{2p,2p}$ cannot cross the boundary of a protrusion, so the graph $G_2$ has no $K_{2p,2p}$-subgraph. Therefore by induction, $\tau_H(G_1)=\tau_H(G_2)\leq C\log(|V(G_2)|)$.

Since $|V(G_2)|\leq (1-c_L/2)n_1$, we have $\tau_H(G_1)\leq C\log(n_1)+C\log(1-c_L/2)$. Therefore $\tau_H(G)\leq h(g_L,2p)+ C\log(n_1)+C\log(1-c_L/2)$. We just have to choose $C$ large enough such that $h(g_L,2p)+C\log(1-c_L/2)\leq 0$ to satisfy the induction step.

For the general case, in which $G$ has no $K_{r,r}$ subgraph, we apply Lemma~\ref{lem:sparsification} to find a set $X$ of size at most $h(4p,r)$ such that $G\setminus X$ has no $K_{2p,2p}$ subgraph. In particular $\tau_H(G)\leq h(4p,r)+C\log n$.
\end{proof}

We now move to the proof for all planar graphs $H$, where we will use the generalized protrusion reduction property given by~\Cref{lem:Nhittingred-disconnected}.
\thmLOG*
\begin{proof}
   The argument is similar to the proof of Theorem~\ref{thm:EP}. Either some connected component $H_1$ of $H$ appears in a linear number of $p$-protrusions, and therefore one can find $k$ independent copies of $H_1$ in a bounded size set $X_1$, allowing to reduce to the $k(H\setminus H_1)$-free case. Or there is a linear number of $p$-protrusions which do not contain any component of $H$, and we can reduce all of them, thus reducing the size by some fixed factor. 
\end{proof}

\subsection{Quasi-polynomial-time algorithm for Maximum Independent Set}

Using~\Cref{thm:EP-sparse}, we are now ready to show our algorithm for MIS, which follows the lines of~\cite{BBDEGHTW23}.
\thmMIS*
\begin{proof}
By Theorem~\ref{thm:EP-sparse}, we just have to reduce the problem from the general case to the sparse case. Indeed, if an $H$-hitting set $X$ has logarithmic size, we can guess it in QP-time, and then guess its intersection $I$ with a MIS in polynomial time. Since $G\setminus (X \cup N[I])$ is $H$-minor free, its tree-width is at most $t$, hence one can compute a maximum independent set $I'$ in linear time. We finally output the maximum of $|I|+|I'|$ for all choices of $I$ and $X$. 

Let us now reduce to the sparse case. We construct the graph $G^{2m}$ whose vertices are all the (possibly pairwise intersecting) copies of $K_{m,m}$ subgraphs of $G$. There is an edge $K,K'$ in $G^{2m}$ if there is an edge connecting a vertex of $K$ to a vertex of $K'$ (which is the case when $K\cap K'\neq \emptyset$). Observe that $G^{2m}$ has no independent set of size $k$, so there is a vertex $K$  with degree at least $|V(G^{2m})|/k$. This vertex $K$ corresponds to a copy of 
$K_{m,m}$ in $G$, so there is a vertex $v$ of this copy which is adjacent to at least a $1/2km$ fraction of the vertices of $G^{2m}$.

Note that $v$ can be found in polynomial time $O(n^{4m})$, which is the cost of constructing $G^{2m}$. Now, if one starts a branching process on $v$ to compute MIS in $G$, the branch $v\notin MIS$ recurses on $G\setminus v$ and the branch $v\in MIS$ recurses on $G\setminus N[v]$. In the latter case the number of $K_{m,m}$ decreases by a $1/2km$ fraction. This branching process ends when the current input graph has no $K_{m,m}$ subgraph. Since one branch decreases by 1 and the other decreases by a fraction (of $n^{2m}$), the leaves in this recursion tree correspond to $0,1$ words of length at most $n$ in which the number of 1 is logarithmic. Hence this preprocess reduces $G$ to a quasi-polynomial $n^{O(\log n)}$ number of instances without $K_{m,m}$ subgraphs, in which we can apply the algorithm in the sparse case.  
\end{proof}

\bibliographystyle{abbrv}
\bibliography{ref} 
\end{document}